\DeclareMathAlphabet{\mathpzc}{OT1}{pzc}{m}{it}
\newcommand{\mat}[1]{\left[\begin{smallmatrix}#1\end{smallmatrix}\right]}
\newcommand{\Mat}[1]{{\smaller\left[\;\begin{matrix}#1\end{matrix}\;\right]}}
\newcommand{\pfa}{\eta}
\newcommand{\C}{\mathcal{C}}
\newcommand{\ZZ}{\mathbb{Z}}           
\newcommand{\sym}{\mathfrak{S}}        
\newcommand{\I}{\mathcal{I}}           
\newcommand{\Poset}{\mathfrak{P}}      
\newcommand{\UP}{\mathcal{P}}          
\newcommand{\Mad}{\mathfrak{A}}        
\newcommand{\ourmap}{\Lambda}          
\newcommand{\jumpmap}{f}               
\newcommand{\duckmap}{g}               
\newcommand{\robbieclass}{\mathcal{M}} 
\newcommand{\setsystem}{E}             
\newcommand{\Ascent}{\mathcal{A}}      
\newcommand{\madmap}{\phi}             
\newcommand{\pirate}{\psi}             
\renewcommand{\Alph}{\mathrm{Alph}}    
\DeclareMathOperator{\Par}{Par}        
\DeclareMathOperator{\Comp}{Comp}      
\DeclareMathOperator{\Mono}{Mono}      
\DeclareMathOperator{\mono}{mono}
\DeclareMathOperator{\BiPar}{BiPar}    
\DeclareMathOperator{\DiComp}{DiComp}  
\DeclareMathOperator{\BiComp}{BiComp}  
\DeclareMathOperator{\RLE}{RLE}        
\DeclareMathOperator{\RLR}{RLR}        
\DeclareMathOperator{\nz}{nz}          
\DeclareMathOperator{\col}{col}
\DeclareMathOperator{\row}{row}
\DeclareMathOperator{\cdes}{cdes}      
\DeclareMathOperator{\sdes}{sdes}      
\DeclareMathOperator{\asc}{asc}
\DeclareMathOperator{\Min}{Min}
\DeclareMathOperator{\Seq}{Seq}
\DeclareMathOperator{\Card}{Card}
\DeclareMathOperator{\Cyc}{Cyc}
\DeclareMathOperator{\LMin}{LMin}
\def\e{
\begin{tikzpicture}[baseline=-2pt]
  \tikzstyle{1} = [fill=black]
  \tikzstyle{0} = [fill=white]
    \matrix[nodes={draw=black,thin}, 
      ampersand replacement=\&, 
      row sep=0.4pt,column sep=0.4pt, inner sep=2pt] {
    \node[1] {}; \\
    };
\end{tikzpicture}
}
\def\t#1,#2,#3,{
\begin{tikzpicture}[baseline=-2pt]
  \tikzstyle{1} = [fill=black]
  \tikzstyle{0} = [fill=white]
    \matrix[nodes={draw=black,thin}, 
      ampersand replacement=\&, 
      row sep=0.4pt,column sep=0.4pt, inner sep=2pt] {
    \node[#1] {}; \&
    \node[#2] {}; \\ \& 
    \node[#3] {}; \\
    };
\end{tikzpicture}
}
\def\p#1,#2,#3,#4,{
\begin{tikzpicture}[baseline=-2.2pt]
  \tikzstyle{1} = [fill=black]
  \tikzstyle{0} = [fill=white]
    \matrix[nodes={draw=black,thin}, 
      ampersand replacement=\&,
      row sep=0.4pt,column sep=0.4pt, inner sep=2pt] {
    \node[#1] {}; \\
    \node[#2] {}; \&
    \node[#3] {}; \\ \& 
    \node[#4] {}; \\
    };
\end{tikzpicture}
}
\def\one{\ensuremath{\Mat{\!1\!}}}
\def\two#1,#2,{
  \begin{tikzpicture}[baseline=-2pt]
    \matrix[nodes={draw=black,thin},
      ampersand replacement=\&, 
      row sep=-0.5pt,column sep=0pt, inner sep=2pt] {
      \node[font=\scriptsize] {#1}; \\
      \node[font=\scriptsize] {#2}; \\
    };
  \end{tikzpicture}
}
\newcommand{\V}[3]{
  \begin{tikzpicture}[xscale=0.3, yscale=0.45, baseline=23pt]
    \tikzstyle{every node} = [font=\footnotesize];
    \tikzstyle{disc} = [ 
      circle,fill=black,draw=black, 
      minimum size=3.3pt, inner sep=0pt];
    \path
    node [disc] (x) at (1, 2) {}
    node [disc] (y) at (3, 2) {}
    node [disc] (z) at (2, 1) {};
    \draw 
    (x) node[above] {$#1$} -- (z) node[below] {$#3$}
    (y) node[above] {$#2$} -- (z)
    ;
  \end{tikzpicture}
}
\newtheorem{theorem}{Theorem}
\newtheorem{lemma}[theorem]{Lemma}
\newtheorem{proposition}[theorem]{Proposition}
\newtheorem{corollary}[theorem]{Corollary}
\theoremstyle{definition}
\newtheorem{definition}[theorem]{Definition}
\newtheorem{example}[theorem]{Example}
\title{Partition and Composition matrices}
\author[A. Claesson]{Anders Claesson}
\author[M. Dukes]{Mark Dukes}
\author[M. Kubitzke]{Martina Kubitzke}
\address{A. Claesson and M. Dukes: Department of Computer and Information Sciences, University of Strathclyde, Glasgow, G1 1XH, UK.}
\address{M. Kubitzke: Fakult\"at f\"ur Mathematik, Universit\"at Wien, Nordbergstra{\ss}e 15, A-1090 Vienna, Austria.}
\thanks{All authors were supported by grant No. 090038012 from the Icelandic Research Fund.}
\date{\today}
\begin{document}
\maketitle
\thispagestyle{empty}

\begin{abstract}
  This paper introduces two matrix analogues for set partitions. A
  composition matrix on a finite set $X$ is an upper triangular matrix whose entries
  partition $X$, and for which there are no rows
  or columns containing only empty sets. A partition matrix is a
  composition matrix in which an order is placed on where entries may
  appear relative to one-another.
  
  We show that partition matrices are in one-to-one correspondence
  with inversion tables. Non-decreasing inversion tables are shown to
  correspond to partition matrices with a row ordering
  relation. Partition matrices which are $s$-diagonal are classified in
  terms of inversion tables. Bidiagonal partition matrices are
  enumerated using the transfer-matrix method and are equinumerous
  with permutations which are sortable by two pop-stacks in parallel.

  We show that composition matrices on $X$ are in one-to-one correspondence
  with $(2+2)$-free posets on $X$. 
  Also, composition matrices whose rows satisfy a column-ordering relation
  are shown to be in one-to-one correspondence with parking functions. 
  Finally, we show
  that pairs of ascent sequences and permutations are in
  one-to-one correspondence with $(2+2)$-free posets whose elements are
  the cycles of a permutation, and use this relation to give an
  expression for the number of $(2+2)$-free posets on $\{1,\dots,n\}$.
\end{abstract}

\section{Introduction}

We present two matrix analogues for set partitions that are
intimately related to both permutations and $(2+2)$-free posets.

\begin{example}\label{example:A}
  Here is an instance of what we shall call a partition matrix:
  $$A = \Mat{ 
    \{1,2,3\} & \emptyset & \{5,7,8\} & \{9\} \\
    \emptyset & \{4\}     & \{6\}     & \{11\} \\
    \emptyset & \emptyset & \emptyset & \{13\} \\
    \emptyset & \emptyset & \emptyset & \{10,12\} 
  }.
  $$ 
\end{example}

\begin{definition}\label{def:M}
  Let $X$ be a finite subset of $\{1,2,\dots\}$.
  A \emph{partition matrix} on $X$ is an upper
  triangular matrix over the powerset of $X$ satisfying the
  following properties:
  \begin{enumerate}
  \item[(i)] each column and row contain at least one non-empty set;
  \item[(ii)] the non-empty sets partition $X$;
  \item[(iii)] $\col(i)<\col(j) \implies i<j$,  
  \end{enumerate}
  where $\col(i)$ denotes the column in which $i$ is a member. Let
  $\Par_n$ be the collection of all partition matrices on $[1,n]=\{1,\dots,n\}$.
\end{definition}

For instance,
\begin{align*}
  \Par_1 &= \{\mat{\{1\}}\};\\
  \Par_2 &= \left\{ \mat{\{1,2\}},\mat{\{1\}&\emptyset\\ \emptyset&\{2\}} \right\};\\
  \Par_3 &= \left\{ \mat{\{1,2,3\}},
  \mat{ \{1,2\} & \emptyset \\ \emptyset& \{3\} },
  \mat{ \{1\} & \{2\} \\ \emptyset & \{3\}},
  \mat{ \{1\} & \{3\} \\ \emptyset & \{2\}},
  \mat{ \{1\} & \emptyset \\ \emptyset &\{2,3\}},
  \mat{ 
    \{1\} & \emptyset & \emptyset\\ 
    \emptyset & \{2\} & \emptyset \\ \emptyset & \emptyset &\{3\}
  }
  \right\}.
\end{align*}
In Section \ref{pmit} we present a bijection between $\Par_n$ and
the set of \emph{inversion tables} 
$$\I_n = [0,0] \times [0,1] \times \dots \times [0,n-1],\,\text{ where }\,
[a,b]=\{i\in\ZZ: a\le i\le b\}.
$$ 
  
Non-decreasing inversion tables are shown to correspond to partition
matrices with a row ordering relation. Partition matrices which are
$s$-diagonal are classified in terms of inversion tables. Bidiagonal
partition matrices are enumerated using the transfer-matrix method and
are equinumerous with permutations which are sortable by two
pop-stacks in parallel.

In Section \ref{lpsec} we show that composition matrices on $X$ are in
one-to-one correspondence with $(2+2)$-free posets on $X$. 
We also show that composition matrices whose rows satisfy a 
column-ordering relation are in one-to-one correspondence 
with parking functions. 

Finally, in Section \ref{pcp} we show that pairs of
ascent sequences and permutations are in one-to-one correspondence
with $(2+2)$-free posets whose elements are the cycles of a
permutation, and use this relation to give an expression for the
number of $(2+2)$-free posets on $[1,n]$.

Taking the entry-wise cardinality of the matrices in $\Par_n$ one
gets the matrices of Dukes and Parviainen ~\cite{dp}. In that sense,
we generalize the paper of Dukes and Parviainen in a similar way as
Claesson and Linusson \cite{cl} generalized the paper of
Bousquet-M\'elou et al.\ ~\cite{bcdk}.  We note, however, that if we
restrict our attention to those inversion tables that enjoy the
property of being an {\it{ascent sequence}}, then we do \emph{not}
recover the bijection of Dukes and Parviainen.

\section{Partition matrices and inversion tables}\label{pmit}

For $w$ a sequence let $\Alph(w)$ denote the set of distinct entries in
$w$. In other words, if we think of $w$ as a word, then $\Alph(w)$ is
the (smallest) alphabet on which $w$ is written. Also, let us write
$\{a_1,\dots,a_k\}_<$ for a set whose elements are listed in
increasing order, $a_1<\dots <a_k$. Given an inversion table
$w=(x_1,\dots,x_n)\in\I_n$ with $\Alph(w)=\{y_1,\dots,y_k\}_<$ define
the $k\times k$ matrix $A=\ourmap(w)\in\Par_n$ by
$$A_{ij} = \big\{\, \ell :\, x_\ell=y_i \text{ and } y_j<\ell\le y_{j+1}\,\big\},
$$ 
where we let $y_{k+1}=n$. For example, with 
\begin{align*}
  w &= (0,0,0,3,0,3,0,0,0,8,3,8)\in\I_{12}
\intertext{we have $\Alph(w)=\{0,3,8\}$ and}
\ourmap(w) &= \Mat{
  \{1,2,3\} & \{5,7,8\} & \{9\}     \\ 
  \emptyset & \{4,6\}   & \{11\}    \\ 
  \emptyset & \emptyset & \{10,12\} 
}\in\Par_{12}.
\end{align*}

We now define a map $K:\Par_n\to\I_n$. Given $A \in \Par_n$, for
$\ell\in [1,n]$ let $x_\ell = \min(A_{\ast i})-1$ where $i$ is the row
containing $\ell$ and $\min(A_{\ast i})$ is the smallest entry in
column $i$ of $A$. Define
$$K(A)=(x_1,\dots,x_n).
$$

\begin{theorem}\label{thm:main}
  The map $\ourmap: \I_n \to \Par_n$ is a bijection and $K$ is
  its inverse.
\end{theorem}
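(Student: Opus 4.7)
The plan is to verify four things in order: (a) $\ourmap(w) \in \Par_n$ for every $w \in \I_n$; (b) $K(A) \in \I_n$ for every $A \in \Par_n$; (c) $K \circ \ourmap = \mathrm{id}_{\I_n}$; and (d) $\ourmap \circ K = \mathrm{id}_{\Par_n}$. The observation driving all four steps is that in $A = \ourmap(w)$ the rows are indexed by the distinct entries $y_1 < \cdots < y_k$ of $w$, while the columns partition $[1,n]$ into the consecutive integer blocks $(y_1, y_2], (y_2, y_3], \ldots, (y_k, n]$: an element $\ell$ lands in the column determined by its own value and in the row determined by $x_\ell = y_i$.

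For (a) I would check the three clauses of Definition \ref{def:M} in turn. Property (ii) is immediate because each $\ell$ has a unique $x_\ell \in \Alph(w)$ and lies in a unique block $(y_j, y_{j+1}]$. Property (i) follows because $y_j + 1$ sits in column $j$ and because every $y_i \in \Alph(w)$ is realised by some $x_\ell$. Upper triangularity reduces to the one-line bound: if $\ell \in A_{ij}$ then $y_i = x_\ell < \ell \le y_{j+1}$, which combined with the strict ordering $y_1 < \cdots < y_{k+1}$ forces $i \le j$. Finally, (iii) holds because $\col(\ell) = j < j' = \col(\ell')$ gives $\ell \le y_{j+1} \le y_{j'} < \ell'$.

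For (b) set $m_i := \min(A_{\ast i})$, which is well defined by (i). Then $x_\ell = m_{i(\ell)} - 1 \ge 0$, where $i(\ell)$ denotes the row of $\ell$. Upper triangularity places $\ell$ in some column $j \ge i(\ell)$, and (iii) then forces $m_{i(\ell)} \le \ell$, giving $x_\ell \le \ell - 1$, so $K(A) \in \I_n$. For (c) the smallest integer in column $i$ of $\ourmap(w)$ is $y_i + 1$, so $K$ returns $y_i = x_\ell$ for every $\ell$ in row $i$, reconstructing $w$ on the nose.

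The main effort is in (d). Condition (iii) forces $m_1 < \cdots < m_k$, whence $\Alph(K(A)) = \{m_1 - 1, \ldots, m_k - 1\}_<$, and the columns of $\ourmap(K(A))$ are then the integer intervals $[m_j, m_{j+1}-1]$ for $j<k$ together with $[m_k, n]$. The delicate step, and the one I expect to require the most care, is matching these intervals with the columns of $A$ itself: using (iii), column $j$ of $A$ must be contained in $[m_j, m_{j+1}-1]$, and conversely every integer in this interval lands in column $j$ since it is too small to belong to a column $> j$ and too large to belong to a column $< j$. Once the columns coincide, row agreement within each column follows immediately from the definition of $K$, which matched row indices by construction, and one concludes $\ourmap(K(A)) = A$.
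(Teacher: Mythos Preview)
Your proposal is correct and follows exactly the same four-step scheme as the paper's proof, with essentially identical arguments inside each step (including the key observation in (d) that the column minima $m_j$ recover $\Alph(K(A))$ and hence the column intervals of $\ourmap(K(A))$). One trivial slip of phrasing in (b): when $\ell$ lies in column $j=i(\ell)$ the inequality $m_{i(\ell)}\le \ell$ comes from minimality rather than from condition~(iii), but this is clearly what you mean.
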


\begin{proof}
  It suffices to show the following four statements:
  \begin{enumerate}
  \item $\ourmap(\I_n)\subseteq\Par_n$;            \label{p1}
  \item $K(\Par_n)\subseteq\I_n$;                  \label{p2}
  \item $K(\ourmap(w))=w$ for all $w$ in $\I_n$;   \label{p3}
  \item $\ourmap(K(A))=A$ for all $A$ in $\Par_n$. \label{p4}
  \end{enumerate}\smallskip

  Proof of~\eqref{p1}: Assume that $w=(x_1,\dots,x_n)\in\I_n$ with
  $\Alph(w)=\{y_1,\dots,y_k\}_<$, and let $A=\ourmap(w)$. We first need to
  see that $A$ is upper triangular. Let $i>j$ and consider the entry $A_{ij}$.
  Assume that $x_{\ell}=y_i$. Since $w\in\I_n$ we have $\ell>x_{\ell}$ and thus $\ell>y_i$.
  Since $y_1 < \dots <y_k$ and $i\geq j+1$ we have
  $\ell>y_i\geq y_{j+1}$. Thus $A_{ij}=\emptyset$ if $i>j$; that is, $A$ is
  upper triangular.

  Denote by
  $A_{i\ast}$ and $A_{\ast j}$ the union of the sets in the $i$th row
  and the $j$th column of $A$, respectively. By definition, we have
  $A_{i\ast}=\{\ell: x_\ell = y_i \}$ and $A_{\ast j}=[y_j+1,y_{j+1}]$
  and clearly both sets are non-empty. Thus $A$ satisfies condition
  (i) of Definition~\ref{def:M}. To show (ii), it suffices to note
  that the entries $A_{i \ast}$ form a partition of $[1,n]$, and
  so do the entries $A_{\ast j}$. To show (iii), let $u,v\in [1,n]$ with
  $\col(u)<\col(v)$. Also, let $p=\col(u)$ and $q=\col(v)$. Then $u\le
  y_{p+1}$ and $y_q<v$. Since $p+1\leq q$ and the numbers $y_i$ are
  increasing, it follows that $u\le y_{p+1}\le y_q<v$.\medskip

  Proof of~\eqref{p2}: Given $A\in\Par_n$ choose any $\ell\in
  [1,n]$. Suppose that $\ell$ is in row $i$ of $A$ and let $a =
  \min(A_{\ast i})$ be the smallest entry in column $i$ of $A$. If
  $\col(a)=\col(\ell)$ then $a\le\ell$, and so $x_\ell=a-1 \leq
  \ell-1$. Otherwise, $\col(a)<\col(\ell)$ and so, from condition
  (iii) of Definition~\ref{def:M}, we have $a<\ell$. Thus
  $x_\ell<\ell-1$.\medskip

  Proof of~\eqref{p3}: Let $w=(x_1,\dots,x_n)\in\I_n$,
  $\Alph(w)=\{y_1,\dots,y_k\}_<$, $A=\ourmap(w)$ and
  $K(A)=(z_1,\dots,z_m)$. From the definitions of $\ourmap$ and $K$ it
  is clear that $n=m$. Suppose that $\ell\in [1,n]$ is in row $i$ of
  $A$; then $x_\ell=y_i$. Also, by the definition of
  $\ourmap$, the smallest entry in column $i$ of $A$ is $y_i+1$. From
  the definition of $K$ we have $z_\ell = (y_i+1) -1 = y_i = x_\ell$.
  So $x_\ell=z_\ell$ for all $\ell\in[1,n]$, and hence $w=z=K(A)$.\medskip

  Proof of~\eqref{p4}: Let $A\in\Par_n$, $K(A)=w=(x_1,\dots,x_n)$,
  $\Alph(w)=\{y_1,\dots,y_k\}_<$ and $P=\ourmap(w)$. Also, define
  $z_j=\min(A_{\ast j})-1$. Then, for $\ell\in[1,n]$, we have
  \begin{equation}\label{eq:p4}
    \ell\in A_{ij} \iff x_\ell = z_i \text{ and } \ell\in[z_j+1, z_{j+1}]
  \end{equation}
  by the definitions of $K$ and $z_j$. In particular, this means that
  each $x_\ell$ equals some $z_i$ and, similarly, each $z_i$ equals
  some $x_\ell$. Hence $\Alph(w)=\{z_1,\dots,z_{\dim(A)}\}_<$ and it
  follows that $\dim(A)=k$ and $y_j=z_j$ for all $j\in [1,k]$. So we
  can restate \eqref{eq:p4} as
  $$\ell\in A_{ij} \iff x_\ell = y_i \text{ and } \ell\in[y_j+1, y_{j+1}].
  $$ By the definition of $\ourmap$, the right-hand side is equivalent
  to $\ell\in P_{ij}$.  Thus $A=P$.
\end{proof}

\subsection{Statistics on partition matrices and inversion tables}

Given $A \in \Par_n$, let $\Min(A)=\{\min(A_{\ast j}) :
j\in[1,\dim(A)]\}$.  For instance, the matrix $A$ in
Example~\ref{example:A} has $\Min(A)=\{1,4,5,9\}$.  From the
definition of $\ourmap$ the following proposition is apparent.

\begin{proposition}\label{prop:alph}
  If $w\in \I_n$, $\Alph(w)=\{y_1,\dots,y_k\}_{<}$ and $A=\ourmap(w)$,
  then
  $$
  \Min(A) = \{y_1+1,\dots,y_k+1\}\text{ and }
  \dim(A)=|\Alph(w)|.
  $$
\end{proposition}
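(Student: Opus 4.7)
The plan is to observe that both parts of the proposition are essentially immediate consequences of identifications already established in the proof of Theorem~\ref{thm:main}, so the proof is really just a matter of unpacking the definition of $\ourmap$ and citing those observations.

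First I would handle $\dim(A) = |\Alph(w)|$. By the very definition of $\ourmap(w)$, the matrix $A$ is constructed as a $k \times k$ matrix where $k = |\Alph(w)|$ (its rows are indexed by the elements of $\Alph(w)$ and its columns by the gaps between consecutive elements of $\Alph(w)$, together with the final gap using $y_{k+1}=n$). So $\dim(A) = k = |\Alph(w)|$ holds by construction.

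Next I would establish $\Min(A) = \{y_1+1, \dots, y_k+1\}$. For each $j \in [1,k]$, I would appeal to the computation made in the proof of part~\eqref{p1} of Theorem~\ref{thm:main}, where it is shown that the union $A_{\ast j}$ of the entries in column $j$ is the interval $[y_j+1, y_{j+1}]$. The smallest element of this interval is $y_j+1$, so $\min(A_{\ast j}) = y_j+1$. Taking the set over all $j \in [1,k]$ yields $\Min(A) = \{y_1+1, \dots, y_k+1\}$, as claimed.

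There is no real obstacle here; the main thing to be careful about is making sure $A_{\ast j}$ really equals the full interval $[y_j+1, y_{j+1}]$ (not just a subset of it), which follows because every $\ell$ in that interval satisfies $x_\ell = y_i$ for some $i$ with $\ell > x_\ell$, so $\ell$ lies in some $A_{ij}$ by the defining rule of $\ourmap$. This is precisely the content already used in the verification that $\ourmap(w) \in \Par_n$, so we may simply invoke it.
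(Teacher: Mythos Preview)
Your proposal is correct and matches the paper's approach: the paper simply states that the proposition is ``apparent'' from the definition of $\ourmap$, and your argument is exactly the unpacking of that definition (together with the identity $A_{\ast j}=[y_j+1,y_{j+1}]$ established in the proof of Theorem~\ref{thm:main}) that justifies this claim.
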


\begin{corollary}
  The statistic $\dim$ on $\Par_n$ is Eulerian.
\end{corollary}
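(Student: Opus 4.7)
The plan is to reduce the claim to a statement about inversion tables. By Proposition~\ref{prop:alph}, the bijection $\ourmap : \I_n \to \Par_n$ of Theorem~\ref{thm:main} carries $|\Alph|$ to $\dim$; hence the distribution of $\dim$ on $\Par_n$ coincides with that of $|\Alph|$ on $\I_n$. It will therefore suffice to show that the number of $w \in \I_n$ with $|\Alph(w)| = k$ equals the Eulerian number counting permutations of $[1,n]$ with $k-1$ descents. The shift by one simply reflects that $|\Alph|$ takes values in $\{1,\dots,n\}$ while the descent statistic takes values in $\{0,\dots,n-1\}$.

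The core step is a recurrence obtained by peeling off the last coordinate. Set $a_{n,k} = |\{w \in \I_n : |\Alph(w)| = k\}|$ and decompose each $w \in \I_n$ as $(w', x_n)$ with $w' \in \I_{n-1}$ and $x_n \in [0,n-1]$. Among the $n$ available values for $x_n$, exactly $|\Alph(w')|$ already lie in $\Alph(w')$ (leaving the alphabet size unchanged) while the remaining $n - |\Alph(w')|$ are new (increasing it by one). Grouping the sum over $w'$ according to $|\Alph(w')|$ yields
$$a_{n,k} \;=\; k\cdot a_{n-1,k} \;+\; (n-k+1)\cdot a_{n-1,k-1},$$
together with the base case $a_{1,1} = 1$. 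This is exactly the classical Eulerian recurrence with the index shifted by one, and a short induction identifies $a_{n,k}$ with the required Eulerian number.

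I expect no genuine obstacle: the whole argument is packaged in the dichotomy ``$x_n$ is a previously seen letter'' versus ``$x_n$ is a new letter,'' and the Eulerian recurrence drops out at once. The only mild bookkeeping is the index shift noted above, which is purely cosmetic.
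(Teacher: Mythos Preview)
Your proof is correct, and the reduction step is identical to the paper's: both use Theorem~\ref{thm:main} and Proposition~\ref{prop:alph} to transport $\dim$ on $\Par_n$ to $|\Alph|$ on $\I_n$. The paper then stops and cites a proof due to Deutsch (via \cite[Corollary~19]{cl}) that $|\Alph|$ is Eulerian on inversion tables, whereas you supply a self-contained argument by peeling off the last coordinate to obtain the recurrence $a_{n,k} = k\,a_{n-1,k} + (n-k+1)\,a_{n-1,k-1}$ and identifying it with the shifted Eulerian recurrence. So your route is not genuinely different, just more complete: the paper outsources exactly the step you carry out. Your dichotomy is sound---note in particular that $\Alph(w')\subseteq[0,n-2]$, so all $|\Alph(w')|$ old values are indeed available as choices for $x_n\in[0,n-1]$, leaving $n-|\Alph(w')|$ new ones---and the index shift you flag is handled correctly.
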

\begin{proof}
  The statistic ``number of distinct entries in the inversion table''
  is Eulerian; that is, it has the same distribution on $\sym_n$ (the
  symmetric group) as the number of descents. For a proof due to
  Deutsch see ~\cite[Corollary 19]{cl}.
\end{proof}

Let us say that $i$ is a {\it special descent} of $w=(x_1,\dots ,x_n)
\in \I_n$ if $x_i>x_{i+1}$ and $i$ does not occur in $w$. Let
$\sdes(w)$ denote the number of special descents of $w$, so
$$\sdes(w) = 
|\{\, i : x_i > x_{i+1}\,\text{ and } x_{\ell} \neq i\text{ for all }\ell\in [1,n]\,\}|.
$$
Claesson and Linusson~\cite{cl} conjectured that $\sdes$
has the same distribution on $\I_n$ as the so-called bivincular
pattern $p=(231,\{1\},\{1\})$ has on $\sym_n$. An occurrence of $p$ in
a permutation $\pi=a_1\dots a_n$ is a subword $a_ia_{i+1}a_j$ such
that $a_{i+1}>a_i=a_j+1$. We shall define a statistic on partition
matrices that is equidistributed with $\sdes$. Given $A\in\Par_n$ let
us say that $i$ is a \emph{column descent} if $i+1$ is in the same
column as, and above, $i$ in $A$. Let $\cdes(A)$ denote the number of
column descents in $A$, so
$$\cdes(A) = 
|\{\, i : \row(i)>\row(i+1)\,\text{ and }\col(i)=\col(i+1) \,\}|.
$$

\begin{proposition}
  The special descents of $w\in\I_n$ equal the column descents of $\ourmap(w)$.
\end{proposition}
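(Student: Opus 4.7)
The plan is to translate the two conditions defining a column descent of $A = \ourmap(w)$ directly into the two conditions defining a special descent of $w$, so that the equivalence becomes almost tautological.

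First, I would recall from the proof of Theorem~\ref{thm:main} that for $A = \ourmap(w)$ with $\Alph(w) = \{y_1,\dots,y_k\}_<$ and $y_{k+1}=n$, one has $A_{\ast j} = [y_j + 1,\, y_{j+1}]$ and $A_{i\ast} = \{\ell : x_\ell = y_i\}$. Consequently, for each $\ell \in [1,n]$, $\col(\ell)$ is the unique index $j$ with $y_j < \ell \le y_{j+1}$, and $\row(\ell)$ is the unique $i$ with $x_\ell = y_i$.

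Next, I would establish the key equivalence $\col(i) = \col(i+1) \iff i \notin \Alph(w)$. If $i = y_m$ for some $m$, then $\col(i) = m-1$, while $i+1 > y_m$ forces $\col(i+1) \ge m$, so the two columns differ. Conversely, if $i \notin \Alph(w)$ and $y_j < i \le y_{j+1}$, then in fact $i < y_{j+1}$ strictly, so $y_j < i+1 \le y_{j+1}$ and $\col(i+1) = j = \col(i)$.

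Finally, because $y_1 < \cdots < y_k$ is strictly increasing, the condition $\row(i) > \row(i+1)$ is equivalent to $y_{\row(i)} > y_{\row(i+1)}$, i.e.\ $x_i > x_{i+1}$. Combining the two equivalences, $i$ is a column descent of $A$ iff $i \notin \Alph(w)$ and $x_i > x_{i+1}$, iff $i$ is a special descent of $w$. I do not expect any serious obstacle here; the entire argument amounts to reading off $\row$ and $\col$ of each $\ell$ directly from the entries of $w$, an observation that is already implicit in the proof of Theorem~\ref{thm:main}.
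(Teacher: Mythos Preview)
Your proposal is correct and follows essentially the same approach as the paper: both proofs use the identities $A_{i\ast}=\{\ell:x_\ell=y_i\}$ and $A_{\ast j}=[y_j+1,y_{j+1}]$ to show separately that $\row(i)>\row(i+1)\iff x_i>x_{i+1}$ and that $\col(i)=\col(i+1)\iff i\notin\Alph(w)$. Your treatment of the column equivalence is arguably a bit more transparent than the paper's, but the underlying argument is the same.
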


\begin{proof}
  Given $t\in [1,n-1]$, let
  $u=\row(t+1)$ and $v=\row(t)$. As before, let $w=(x_1,\dots,x_n)$ and
  $\Alph(w)=\{y_1,\dots,y_k\}_{<}$. By
  the definition of $\ourmap$ we have $x_{t+1}=y_u$ and $x_t=y_v$. So,
  since the numbers $y_i$ are increasing, we have
  $$\row(t+1)<\row(t) \iff u < v \iff y_u < y_v \iff x_{t+1} < x_t.
  $$
  Now, let $u=\col(t+1)$ and $v=\col(t)$. Then
  \begin{align*}
    \col(t+1) = \col(t) 
    &\iff t+1\in [y_u+1, y_{u+1}]\text{ and } t\in[y_v+1,y_{v+1}] \\
    &\iff y_u+1 \leq t\leq y_{v+1}-1 \\
    &\iff x_j \neq t\text{ for all } j\in [1,n],
  \end{align*}
  which concludes the proof.
\end{proof}

\begin{corollary}
  The statistic $\sdes$ on $\I_n$ has the same distribution as
  $\cdes$ on $\Par_n$.
\end{corollary}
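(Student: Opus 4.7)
The plan is to observe that this corollary follows immediately by combining the bijection $\ourmap \colon \I_n \to \Par_n$ from Theorem~\ref{thm:main} with the statistic-preservation result of the preceding proposition. Specifically, the proposition establishes that the set of special descents of any $w \in \I_n$ coincides with the set of column descents of $\ourmap(w) \in \Par_n$, which in particular gives $\sdes(w) = \cdes(\ourmap(w))$ as integer-valued statistics.

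From here I would finish by a one-line distribution argument: for any $k \ge 0$,
\begin{equation*}
|\{w \in \I_n : \sdes(w) = k\}| = |\{w \in \I_n : \cdes(\ourmap(w)) = k\}| = |\{A \in \Par_n : \cdes(A) = k\}|,
\end{equation*}
where the second equality uses that $\ourmap$ is a bijection. Thus $\sdes$ on $\I_n$ and $\cdes$ on $\Par_n$ have identical generating polynomials, i.e., the same distribution.

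There is no real obstacle here, since all of the work was carried out in the preceding proposition and in Theorem~\ref{thm:main}; the corollary is a formal consequence of a bijection that transports one statistic to the other. The only thing worth double-checking is that the statistic equality genuinely holds pointwise (not merely in distribution), which is exactly what the proposition asserts.
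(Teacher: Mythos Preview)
Your proposal is correct and matches the paper's approach: the corollary is stated without proof in the paper precisely because it is an immediate consequence of the preceding proposition (pointwise equality of the two descent sets under $\ourmap$) together with the bijection of Theorem~\ref{thm:main}. Your one-line distribution argument makes this explicit and is exactly what is intended.
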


\subsection{Non-decreasing inversion tables and partition matrices}

Let us write $\Mono_n$ for the collection of matrices in $\Par_n$ which
satisfy
\begin{enumerate}
\item[(iv)] $\row(i)<\row(j) \implies i<j$,
\end{enumerate}
where $\row(i)$ denotes the row in which $i$ is a member. We say that
an inversion table $(x_1,\dots , x_n)$ is {\it non-decreasing} if
$x_i\leq x_{i+1}$ for all $1\leq i <n$.

\begin{theorem}
   Under the map $\ourmap: \Par_n \to \I_n$, matrices in $\Mono_n$
   correspond to non-decreasing inversion tables.
\end{theorem}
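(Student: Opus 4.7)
The plan is to translate the defining condition (iv) of $\Mono_n$ directly into the condition that an inversion table is non-decreasing, via the bijection $\ourmap$ from Theorem~\ref{thm:main}. The whole argument will reduce to a single observation about how rows encode the coordinates of $w$, after which both implications become one-line contrapositives.

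First I would record the following identity, which is essentially built into the definition of $\ourmap$: an element $\ell \in [1,n]$ lies in row $i$ of $A = \ourmap(w)$ precisely when $x_\ell = y_i$, where $\Alph(w) = \{y_1,\dots,y_k\}_<$. Since $y_1 < \cdots < y_k$, this yields
\[
\row(\ell) < \row(\ell') \iff x_\ell < x_{\ell'}
\]
for all $\ell,\ell' \in [1,n]$. With this in hand, condition (iv) reads: for all $u,v \in [1,n]$, $x_u < x_v \implies u < v$. Its contrapositive is $u \ge v \implies x_u \ge x_v$. Specializing to $u = k+1$, $v = k$ for $k \in [1,n-1]$ gives $x_k \le x_{k+1}$, so $w$ is non-decreasing; conversely, if $w$ is non-decreasing then transitivity gives $x_u \ge x_v$ whenever $u \ge v$, which is exactly the contrapositive of (iv). Combining this equivalence with the bijectivity of $\ourmap$ established in Theorem~\ref{thm:main} concludes both directions.

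The only subtle point — not really an obstacle — is keeping straight that condition (iv) is phrased in terms of the \emph{elements} $i,j \in [1,n]$ rather than the row indices, so that the correspondence $\ell \leftrightarrow x_\ell$ mediated by $\row$ is the relevant one to invoke. Everything else is a routine unpacking once the key identity above is stated.
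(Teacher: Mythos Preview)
Your proof is correct and follows essentially the same route as the paper's: both hinge on the observation that $\ell$ lies in row $i$ of $\ourmap(w)$ if and only if $x_\ell = y_i$, and then use the fact that the $y_i$ are increasing to conclude that condition (iv) is equivalent to $w$ being non-decreasing. You simply spell out the equivalence via contrapositives more carefully than the paper, which asserts it in one sentence.
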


\begin{proof}
  Let $w=(x_1,\dots,x_n)\in\I_n$ and $\Alph(w)=\{y_1,\dots,y_k\}_{<}$.
  Looking at the definition of
  $\ourmap$, we see that $\ell$ is in row $i$ of $\ourmap(w)$ if and
  only if $x_\ell=y_i$. Since the numbers $y_i$ are increasing it follows
  that condition (iv) holds if and only if $w$ is non-decreasing, as
  claimed.
\end{proof}

\begin{proposition}\label{prop:mono}
  $|\Mono_n|={2n \choose n}/(n+1)$, the $n$th Catalan number.
\end{proposition}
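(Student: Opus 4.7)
The plan is to invoke the preceding theorem to reduce the count to an enumeration of non-decreasing inversion tables, and then to exhibit an explicit bijection between those and Dyck paths of semilength $n$.

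By the theorem just proved, $\ourmap$ restricts to a bijection between $\Mono_n$ and the non-decreasing members of $\I_n$, i.e.\ sequences $(x_1,\dots,x_n)$ with $0=x_1\le x_2\le\dots\le x_n$ and $x_i\le i-1$ for every $i$. So it suffices to show that the number of such sequences equals the $n$th Catalan number $C_n={2n\choose n}/(n+1)$.

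To do this, I would send $(x_1,\dots,x_n)$ to the lattice word
$$U\,D^{d_1}\,U\,D^{d_2}\,\cdots\,U\,D^{d_n},$$
where $d_i=x_{i+1}-x_i$ for $i<n$ and $d_n=n-x_n$. The number of down-steps is $\sum_i d_i=n-x_1=n$, matching the $n$ up-steps. After the $i$-th $U$ the height equals $i-x_i\ge 1$ (using $x_i\le i-1$), and after the following run of $D$'s it equals $i-x_{i+1}\ge 0$ (using $x_{i+1}\le i$); so the word is a Dyck path. In the reverse direction, given a Dyck path with $U$-steps at positions $p_1<\dots<p_n$, set $x_i=p_i-i$; the defining inequalities of a non-decreasing inversion table translate exactly into the Dyck condition $p_i\le 2i-1$. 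Since Dyck paths of semilength $n$ are enumerated by $C_n$, this would complete the proof.

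The only piece of real work is the verification of the Dyck condition in both directions, which is straightforward thanks to the telescoping form of the $d_i$. Alternatively one could bypass the bijection by observing that non-decreasing sequences with $x_i\le i-1$ are one of the standard Catalan families (cf.\ Stanley's list), or argue directly via the cycle lemma or reflection principle.
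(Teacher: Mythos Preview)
Your proof is correct and follows essentially the same route as the paper: both invoke the preceding theorem to reduce to counting non-decreasing inversion tables, and then identify those with Dyck paths. The paper encodes a Dyck path by letting $x_i$ be the $y$-coordinate of the $i$th horizontal step in a lattice path from $(0,0)$ to $(n,n)$ staying weakly below the diagonal, which is exactly your $x_i=p_i-i$ in disguise; you simply spell out the verification in more detail.
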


\begin{proof}
  Consider the set of lattice paths in the plane from $(0,0)$ to
  $(n,n)$ which take steps in the set $\{(1,0),(0,1)\}$ and never go
  above the diagonal line $y=x$. Such paths are commonly known as {\it
    Dyck paths}, and they can be encoded as a sequence $(x_1,\dots ,
  x_n)$ where $x_i$ is the $y$-coordinate of the $i$th horizontal step
  $(1,0)$.  The restriction on such a sequence, for it to be Dyck
  path, is precisely that it is a non-decreasing inversion table.  The
  number of Dyck paths from $(0,0)$ to $(n,n)$ is given by the $n$th
  Catalan number.
\end{proof}

We want to remark that a matrix $A\in\Mono_n$ is completely determined
by the cardinalities of its entries. Thus, we can identify $A$ with an
upper triangular matrix that contains non-negative entries which sum
to $n$ and such that there is at least one non-zero entry in each row
and column. Also, the number of such matrices with $k$ rows is equal
to the Narayana number
$$\frac{1}{k}\binom{n}{k}\binom{n}{k-1}.
$$

\subsection{$s$-diagonal partition matrices}

A $k\times k$ matrix $A$ is called \emph{$s$-diagonal} if $A$ is upper
triangular and $A_{ij}=\emptyset$ for $j-i\geq s$. For $s=1$ we get
the collection of diagonal matrices.

\begin{theorem}
  Let $w=(x_1,\dots,x_n)\in \I_n$, $A=\ourmap(w)$ and
  $\Alph(w)=\{y_1,\dots, y_k\}_{<}$.  Define $y_{k+1}=n$. The matrix
  $A$ is $s$-diagonal if and only if for every $i \in [1,n]$ there
  exists an $a(i)\in [1,k]$ such that
  $$y_{a(i)}< i \leq y_{a(i)+1} \text{ and }
  x_i \in \{y_{a(i)},y_{a(i)-1},\dots , y_{\max(1,a(i)-s+1)}\}.$$
\end{theorem}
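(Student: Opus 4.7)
The plan is to translate both the $s$-diagonal condition on $A$ and the two conditions on $w$ into identical statements about $\row$ and $\col$ of elements $i \in [1,n]$ in $A$. The key identity to exploit, established in the proof of Theorem~\ref{thm:main}, is that $A_{\ast j}=[y_j+1,y_{j+1}]$ and $A_{i\ast}=\{\ell:x_\ell=y_i\}$. From these it is immediate that $\col(i)$ is the unique index $c$ with $y_c<i\le y_{c+1}$, and $\row(i)$ is the unique index $r$ with $x_i=y_r$ (note $x_i\in\Alph(w)$, so some such $r$ always exists).

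With this dictionary in hand I would first observe that the existence of $a(i)\in[1,k]$ satisfying $y_{a(i)}<i\le y_{a(i)+1}$ is automatic and forces $a(i)=\col(i)$. So the first displayed condition is really just notation: $a(i)$ is just another name for $\col(i)$. Next I would rewrite the second displayed condition. Since $x_i=y_{\row(i)}$, the membership $x_i\in\{y_{a(i)},y_{a(i)-1},\dots,y_{\max(1,a(i)-s+1)}\}$ is equivalent to
\[
\max(1,a(i)-s+1)\le \row(i)\le a(i).
\]
The upper bound $\row(i)\le a(i)=\col(i)$ is automatic because $A$ is upper triangular, so the effective content of the second condition is $\row(i)\ge a(i)-s+1$, i.e.\ $\col(i)-\row(i)\le s-1$.

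Finally, $A$ being $s$-diagonal is by definition the statement that $A_{ij}=\emptyset$ whenever $j-i\ge s$, which means exactly that every $\ell\in[1,n]$ satisfies $\col(\ell)-\row(\ell)<s$. This is the same as the reformulated second condition holding for every $i$, proving both implications simultaneously.

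There is no real obstacle: the entire argument is a bookkeeping exercise in unwinding the definition of $\ourmap$. The only minor care needed is the $\max(1,a(i)-s+1)$ floor on the index set, which exactly reflects the fact that rows of $A$ are indexed starting at $1$, and this plays nicely with the automatic bound $\row(i)\ge 1$. Once the correspondence $a(i)\leftrightarrow\col(i)$ and $x_i=y_{\row(i)}$ is stated explicitly, the equivalence becomes a one-line identity between linear inequalities.
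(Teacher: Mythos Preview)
Your proposal is correct and follows essentially the same approach as the paper: both proofs use the dictionary $\col(i)=a \iff y_a<i\le y_{a+1}$ and $\row(i)=r \iff x_i=y_r$ to translate the $s$-diagonal condition into the stated condition on $w$. Your write-up is in fact more explicit than the paper's (which is a two-sentence sketch), in particular in noting that the first displayed condition merely identifies $a(i)$ with $\col(i)$ and that the $\max(1,\cdot)$ floor and the upper bound $\row(i)\le a(i)$ are automatic.
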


\begin{proof}
  From the definition of $\ourmap$ we have that $i$ is in row $a-j$ of
  $A$ precisely when $x_i=y_{a-j}$, and that $i$ is in column $a$ of
  $A$ precisely when $y_a < i \leq y_{a+1}$.  The matrix $A$ is
  $s$-diagonal if for every entry $i$, there exists $a(i)$ such that
  $i$ is in column $a(i)$ and row $a(i)-j$ for some $0\leq j<s$.
\end{proof}

Setting $s=1$ in the above theorem gives us the class of diagonal
matrices.  These admit a more explicit description which we will now
present.  

In computer science, \emph{run-length encoding} is a simple form of
data compression in which consecutive data elements (runs) are stored
as a single data element and its multiplicity. We shall apply this to inversion
tables, but for convenience rather than compression purposes. Let
$\RLE(w)$ denote the run-length encoding of the inversion table $w$.
For example, 
$$\RLE(0,0,0,0,1,1,0,2,3,3)=(0,4)(1,2)(0,1)(2,1)(3,2).
$$

A sequence of positive integers $(u_1,\dots ,u_k)$ which sum
to $n$ is called an \emph{integer composition} of $n$ and we write this as
$(u_1,\dots ,u_k)\models n$.

\begin{corollary}
  The set of diagonal matrices in $\Par_n$ is the image under $\ourmap$ of
  $$\{\, w\in \I_n : 
  (u_1,\dots ,u_k) \models n \text{ and } 
  \RLE(w) = (p_0,u_1)\dots (p_{k-1},u_k)\,\},
  $$ 
  where $p_0=0$, $p_1=u_1$, $p_2=u_1+u_2$, $p_3=u_1+u_2+u_3$, etc.
\end{corollary}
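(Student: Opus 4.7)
The plan is to specialize the preceding theorem to $s=1$. With $s=1$, the membership condition $x_i \in \{y_{a(i)}, y_{a(i)-1},\dots, y_{\max(1,a(i)-s+1)}\}$ collapses to the single equation $x_i = y_{a(i)}$. Hence $A = \ourmap(w)$ is diagonal precisely when, for each position $i$ lying in the interval $(y_a, y_{a+1}]$, we have $x_i = y_a$; equivalently, $w$ is constant on each half-open block $(y_a, y_{a+1}]$ and takes the value $y_a$ there.

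Next I would translate this block structure into run-length encoding. Every $w \in \I_n$ has $x_1 = 0$, so the smallest letter of $\Alph(w)$ is $y_1 = 0$. Writing $u_a = y_{a+1} - y_a$ for $a = 1, \dots, k$ (with $y_{k+1}=n$), each $u_a \geq 1$ and $u_1 + \dots + u_k = n$, so $(u_1,\dots,u_k) \models n$. The partial sums $p_0 = 0$ and $p_a = u_1 + \dots + u_a$ coincide with the alphabet elements $y_1, \dots, y_k$, so the block description from the previous paragraph reads $\RLE(w) = (p_0,u_1)(p_1,u_2)\dots(p_{k-1},u_k)$. Because the $p_i$ are strictly increasing (as each $u_a \geq 1$), consecutive runs carry distinct symbols, confirming this really is the run-length encoding rather than a finer decomposition.

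Conversely, any $w \in \I_n$ whose RLE has the prescribed form is automatically constant with value $p_{a-1}$ on $(p_{a-1}, p_a]$, so it satisfies the $s=1$ hypothesis and $\ourmap(w)$ is diagonal. Combined with the bijectivity of $\ourmap$ established in Theorem~\ref{thm:main}, this yields the claimed image description. I do not expect a real obstacle here; the argument is essentially bookkeeping that translates the constant-block description of the inversion table into the run-length format. The one point deserving explicit care is pinning the alphabet down as $\{p_0,\dots,p_{k-1}\}$ with $p_0 = 0$, which relies on the forced equality $y_1 = 0$ coming from $x_1 \in [0,0]$.
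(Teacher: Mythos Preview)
Your proposal is correct and matches the paper's approach exactly: the paper states this corollary without proof, merely remarking beforehand that ``setting $s=1$ in the above theorem gives us the class of diagonal matrices,'' and your argument fills in precisely the bookkeeping that this sentence leaves implicit.
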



Since diagonal matrices are in bijection with integer compositions,
the number of diagonal matrices in $\Par_n$ is $2^{n-1}$.  Although
the bidiagonal matrices do not admit as compact a description in terms
of the corresponding inversion tables, we can still count them using
the so-called transfer-matrix method~\cite[\S 4.7]{stanleyI}. Consider the matrix
$$B = \Mat{
  \{1,2\}   &  \{3\}    & \emptyset & \emptyset \\
  \emptyset & \emptyset & \{5\}     & \emptyset \\
  \emptyset & \emptyset & \{4,6\}   & \emptyset \\
  \emptyset & \emptyset & \emptyset & \{7\} \\  
}.
$$ More specifically consider creating $B$ by starting with the empty
matrix, $\epsilon$, and inserting the elements $1$, \dots, $7$ one at
a time:
\begin{align*}
  \epsilon &\to
  \mat{\{1\}} \\[-2ex] &\to
  \mat{\{1,2\}} \to
  \begin{aligned}[t]
    \mat{
      \{1,2\}   & \{3\}\\
      \emptyset & \emptyset
    } & \to
    \mat{
      \{1,2\}   & \{3\}     & \emptyset \\
      \emptyset & \emptyset & \emptyset \\
      \emptyset & \emptyset & \{4\} \\
    } \\ & \to
    \mat{
      \{1,2\}   & \{3\}     & \emptyset\\
      \emptyset & \emptyset & \{5\} \\
      \emptyset & \emptyset & \{4\} \\
    } \\[-0.8ex] & \to
    \mat{
      \{1,2\}   & \{3\}     & \emptyset\\
      \emptyset & \emptyset & \{5\} \\
      \emptyset & \emptyset & \{4,6\} \\
    } \to
    \mat{
      \{1,2\}   & \{3\}     & \emptyset &\emptyset\\
      \emptyset & \emptyset & \{5\}     &\emptyset\\
      \emptyset & \emptyset & \{4,6\}   &\emptyset\\
      \emptyset & \emptyset & \emptyset &\{7\}\\
    }
  \end{aligned}
\end{align*}
We shall encode (some aspects of) this process like this:
$$
\epsilon \to
\e \to
\e \to
\t 1, 1, 0, \to
\p 1, 0, 0, 1, \to
\p 1, 0, 1, 1, \to
\p 1, 0, 1, 1, \to
\p 1, 1, 0, 1,.
$$ Here, \!\scalebox{0.9}{\e}\! denotes any $1\times 1$ matrix whose
only entry is a non-empty set; \!\scalebox{0.85}{\t 1, 1, 0,}\!
denotes any $2\times 2$ matrix whose black entries are non-empty;
\!\scalebox{0.8}{\p 1, 0, 0, 1,}\! denotes any matrix of dimension $3$
or more, whose entries in the bottom right corner match the picture,
that is, the black entries are non-empty; etc. The sequence of
pictures does not, in general, uniquely determine a bidiagonal
matrix, but each picture contains enough information to tell what
pictures can possibly follow it. The matrix below gives all possible
transitions (a $q$ records when a new column, and row, is created):
$$
\begin{array}{ccccccccccccccc}
  & \raisebox{-1pt}{$\epsilon$} &
  \e &           
  \t 1, 0, 1, &  
  \t 1, 1, 0, &  
  \t 1, 1, 1, &  
  \p 0, 1, 0, 1, &
  \p 0, 1, 1, 0, &
  \p 0, 1, 1, 1, &
  \p 1, 0, 0, 1, &
  \p 1, 0, 1, 0, &
  \p 1, 0, 1, 1, &
  \p 1, 1, 0, 1, &
  \p 1, 1, 1, 0, &
  \p 1, 1, 1, 1, \\[2ex]
  \epsilon       & 0 & q & 0 & 0 & 0 & 0 & 0 & 0 & 0 & 0 & 0 & 0 & 0 & 0 \\[0.3ex]
  \e             & 0 & 1 & q & q & 0 & 0 & 0 & 0 & 0 & 0 & 0 & 0 & 0 & 0 \\[0.3ex]
  \t 1, 0, 1,    & 0 & 0 & 1 & 0 & 1 & q & q & 0 & 0 & 0 & 0 & 0 & 0 & 0 \\
  \t 1, 1, 0,    & 0 & 0 & 0 & 1 & 1 & 0 & 0 & 0 & q & q & 0 & 0 & 0 & 0 \\
  \t 1, 1, 1,    & 0 & 0 & 0 & 0 & 2 & 0 & 0 & 0 & 0 & 0 & 0 & q & q & 0 \\
  \p 0, 1, 0, 1, & 0 & 0 & 0 & 0 & 0 & q + 1 & q & 1 & 0 & 0 & 0 & 0 & 0 & 0 \\
  \p 0, 1, 1, 0, & 0 & 0 & 0 & 0 & 0 & 0 & 1 & 1 & q & q & 0 & 0 & 0 & 0 \\
  \p 0, 1, 1, 1, & 0 & 0 & 0 & 0 & 0 & 0 & 0 & 2 & 0 & 0 & 0 & q & q & 0 \\
  \p 1, 0, 0, 1, & 0 & 0 & 0 & 0 & 0 & 0 & 0 & 0 & 1 & 0 & 1 & 0 & 0 & 0 \\
  \p 1, 0, 1, 0, & 0 & 0 & 0 & 0 & 0 & 0 & 0 & 0 & q & q + 1 & 1 & 0 & 0 & 0 \\
  \p 1, 0, 1, 1, & 0 & 0 & 0 & 0 & 0 & 0 & 0 & 0 & 0 & 0 & 2 & q & q & 0 \\
  \p 1, 1, 0, 1, & 0 & 0 & 0 & 0 & 0 & q & q & 0 & 0 & 0 & 0 & 1 & 0 & 1 \\
  \p 1, 1, 1, 0, & 0 & 0 & 0 & 0 & 0 & 0 & 0 & 0 & q & q & 0 & 0 & 1 & 1 \\
  \p 1, 1, 1, 1, & 0 & 0 & 0 & 0 & 0 & 0 & 0 & 0 & 0 & 0 & 0 & q & q & 2
\end{array}
$$ We would like to enumerate paths that start with $\epsilon$ and
end in a configuration with no empty rows or columns. Letting $M$
denote the above transfer-matrix, this amounts to calculating the first
coordinate in
$$(1-xM)^{-1}[1\, 1\, 1\, 0\, 1\, 1\, 0\, 1\, 0\, 0\, 1\, 1\, 0\, 1]^T.
$$

\begin{proposition}
  We have
  $$\sum_{n\geq 0}\sum_{A\in\BiPar_n} q^{\dim(A)}x^n =\frac
  {2x^3  - (q + 5)x^2 + (q + 4)x - 1}
  {2(q^2 + q + 1)x^3 - (q^2 + 4q + 5)x^2 + 2(q + 2)x - 1},
  $$
  where $\BiPar_n$ is the collection of bidiagonal matrices in $\Par_n$.
\end{proposition}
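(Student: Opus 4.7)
The plan is to apply the transfer-matrix method set up in the discussion preceding the proposition. The process of building an element of $\BiPar_n$ by inserting the labels $1, 2, \ldots, n$ in order can be described as a walk on a finite state space whose fourteen nodes are the occupancy patterns of the bottom-right corner displayed in the transition table. Each insertion advances the walk one step, contributing a factor of $x$; an insertion that opens up a new row and column contributes an additional factor of $q$, tracking $\dim(A)$.

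First I would verify that the matrix $M$ displayed above records the correct multiplicities of the legal transitions. For every source state one must enumerate three kinds of insertion: (a) adding the new label to an already non-empty diagonal or superdiagonal cell in the current last column, which leaves the pattern unchanged; (b) adding it to a currently empty diagonal or superdiagonal cell in the last column, which may alter the pattern but not the dimension; and (c) opening a brand-new column (and row), which carries the $q$-weight, forces the previous last column to close, and may also place the new label in the new superdiagonal cell $(k,k+1)$. The weights $2$ and $q+1$ appearing in $M$ need particular care: a $2$ arises when two distinct cells inside the same source pattern can receive the new label and reach the same target pattern, while a $q+1$ combines a dimension-preserving insertion with a dimension-increasing one. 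This bookkeeping is the main obstacle, but it is routine.

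Having confirmed $M$, one identifies the boundary data. Every walk begins at state $\epsilon$, so only the first coordinate of the resulting vector is relevant. A walk is allowed to halt in any state whose pattern represents a legal bidiagonal partition matrix, i.e.\ one in which every row and every column contains a non-empty set; these are precisely the fourteen states indicated by the entries equal to $1$ in
\[
\mathbf{v} = [1,\,1,\,1,\,0,\,1,\,1,\,0,\,1,\,0,\,0,\,1,\,1,\,0,\,1]^{T}.
\]
By the standard transfer-matrix lemma \cite[Thm.~4.7.2]{stanleyI}, the generating function $\sum_{n\ge 0}\sum_{A\in\BiPar_n}q^{\dim(A)}x^n$ equals the first coordinate of $(I - xM)^{-1}\mathbf{v}$.

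The final step is the algebraic extraction of this first coordinate. Applying Cramer's rule to the $14\times 14$ system $(I - xM)\mathbf{u} = \mathbf{v}$ produces a quotient of two polynomials in $x$ and $q$; after the large amount of cancellation that the sparse structure of $M$ forces, the quotient collapses to the claimed rational function. The computation is most comfortably carried out in a computer algebra system, after which one reads off the numerator $2x^3 - (q+5)x^2 + (q+4)x - 1$ and the denominator $2(q^2+q+1)x^3 - (q^2+4q+5)x^2 + 2(q+2)x - 1$, completing the proof.
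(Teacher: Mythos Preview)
Your proposal is correct and follows exactly the approach the paper takes: the transfer-matrix setup with the fourteen corner-occupancy states, the displayed transition matrix $M$, the initial state $\epsilon$, and the terminal vector $\mathbf{v}$, after which the generating function is read off as the first coordinate of $(I-xM)^{-1}\mathbf{v}$. One small slip of phrasing: the states in which a walk may halt are not all fourteen but only the nine flagged by the $1$-entries of $\mathbf{v}$ (those whose pattern has no empty row or column).
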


We find it interesting that the number of bidiagonal matrices in
$\Par_n$ is given by the sequence~\cite[A164870]{oeis}, which
corresponds to permutations of $[1,n]$ which are sortable by two
pop-stacks in parallel. In terms of pattern avoidance those are the
permutations in the class
$$\sym_n(3214, 2143, 24135, 41352, 14352, 13542, 13524).$$ See
Atkinson and Sack ~\cite{as}. Moreover, there are exactly $2^{n-1}$
permutations of $[1,n]$ which are sortable by one pop-stack; hence
equinumerous with the diagonal partition matrices. One might then
wonder about permutations which are sortable by three pop-stacks in
parallel. Are they equinumerous with tridiagonal partition matrices?
Computations show that this is not the case: For $n=6$ there are $646$
tridiagonal partition matrices, but only $644$ permutations which are
sortable by three pop-stacks in parallel. For more on the enumeration of 
permutations sortable by pop stacks in parallel see Smith and 
Vatter~\cite{sv}.

\section{Composition matrices and $(2+2)$-free posets}
\label{lpsec}

Consider Definition~\ref{def:M}. Define a \emph{composition matrix} to be a
matrix that satisfies conditions (i) and (ii), but not necessarily
(iii). Let $\Comp_n\supseteq\Par_n$ denote the set of all composition
matrices on $[1,n]$. The smallest example of a composition matrix that
is not a partition matrix is
$$\Mat{\{2\} &\emptyset\\ \emptyset & \{1\}}.
$$

In this section we shall give a bijection from $\Comp_n$ to the set of
$(2+2)$-free posets on $[1,n]$. This bijection will factor through a
certain union of Cartesian products that we now define. Given a set
$X$, let us write ${ X \choose x_1,\dots , x_{\ell}}$ for the
collection of all sequences $(X_1,\dots ,X_{\ell})$ that are ordered
set partitions of $X$ and $|X_i|=x_i$ for all $i\in [1,\ell]$.  For a
sequence $(a_1,\dots, a_i)$ of numbers let
$$\asc(a_1,\dots,a_i)=|\{j\in[1,i-1]:a_j<a_{j+1}\}|.
$$ Following Bousquet-M\'elou et al.\ ~\cite{bcdk} we define a
sequence of non-negative integers $\alpha=(a_1,\dots , a_n)$ to be an
\emph{ascent sequence} if $a_1=0$ and $a_{i+1} \in [0,1+\asc(a_1,\dots
  , a_i)]$ for $0<i<n$.  Let $\Ascent_n$ be the collection of ascent
sequences of length $n$. Define the {\it run-length record} of
$\alpha$ to be the sequence that records the multiplicities of
adjacent values in $\alpha$. We denote it by $\RLR(\alpha)$. In other
words, $\RLR(\alpha)$ is the sequence of second coordinates in
$\RLE(\alpha)$, the run-length encoding of $\alpha$. For instance, 
$$\RLR(0,0,0,0,1,1,0,2,3,3)=(4,2,1,1,2).
$$

Finally we are in a position to define the set through which our bijection
from $\Comp_n$ to $(2+2)$-free posets on $[1,n]$ will factor. Let
$$ 
\Mad_n = \bigcup_{\alpha\in \Ascent_n}\{ \alpha \}\times 
    { [1,n] \choose \RLR(\alpha) }.
$$

Let $\robbieclass_n$ be the collection of upper triangular matrices
that contain non-negative integers whose entries sum to $n$ and such
that there is no column or row of all zeros. Dukes and
Parviainen~\cite{dp} presented a bijection
$$\Gamma: \robbieclass_n\to \Ascent_n.
$$

Given $A \in \robbieclass_n$, let $\nz(A)$ be the number of non-zero
entries in $A$.  Since it follows from \cite[Thm.\ 4]{dp} that $A$ may be uniquely
constructed, in a step-wise
fashion, from the ascent sequence $\Gamma(A)$, we may associate to
each non-zero entry $A_{ij}$ its time of creation $T_{A}(i,j) \in
[1,\nz(A)]$. By defining $T_{A}(i,j)=0$ if $A_{ij}=0$ we may view
$T_{A}$ as a $\dim(A)\times \dim(A)$ matrix. Define
$\Seq(A)=(y_1,\dots , y_{\nz(A)})$ where $y_t=A_{ij}$ and
$T_{A}(i,j)=t$.  
\begin{example}
  With
  \begin{align*}
    A &=\Mat{ 
      3 & 0 & 3 & 1 \\
      0 & 1 & 1 & 1 \\
      0 & 0 & 0 & 1 \\
      0 & 0 & 0 & 2
    }
    \shortintertext{ we have }
    T_{A} &=\Mat{ 
      1 & 0 & 5 & 8 \\
      0 & 2 & 4 & 7 \\
      0 & 0 & 0 & 6 \\
      0 & 0 & 0 & 3
    }
    \shortintertext{and}
    \Seq(A) &= (3,1,2,1,3,1,1,1).
  \end{align*}
\end{example}

\begin{lemma}\label{olem}
  Given $A \in \robbieclass_n$, we have that $\Seq(A) =
  \RLR(\Gamma(A))$.
\end{lemma}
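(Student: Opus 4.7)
The plan is to unpack the step-by-step construction of $\Gamma^{-1}$ from \cite[Thm.~4]{dp} and identify the maximal constant runs of $\Gamma(A)$ with the non-zero entries of $A$. Recall that given $\alpha = (a_1,\dots,a_n) = \Gamma(A)$, the matrix $A$ is built from the empty matrix by applying, for $t = 1,\dots,n$, one of the operations $\addi, \addii, \addiii$ (with sub-cases $\addiiia, \addiiib$) to the matrix constructed from $(a_1,\dots,a_{t-1})$, the choice being determined by $a_t$ together with the current state. The operation $\addi$ increments a single already non-zero cell---call it the \emph{active cell}---by $1$, while each of $\addii$ and $\addiii$ creates a new non-zero cell of value $1$ at a previously zero position and makes it the new active cell. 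Consequently the indices $t$ at which a new non-zero entry is created are exactly those at which $\addii$ or $\addiii$ is applied, and these number $\nz(A)$.

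The first step is to establish the following structural observation about the rules in \cite{dp}: for $t \ge 2$, the operation applied at step $t$ is $\addi$ if and only if $a_t = a_{t-1}$, and whenever this holds the active cell is unchanged between steps $t-1$ and $t$. This is the main obstacle of the proof, and it reduces to a short case analysis over $\addi, \addii, \addiiia, \addiiib$: one checks directly that $\addi$ is triggered precisely when $a_t$ matches the ``encoding'' of the current active cell, which equals $a_{t-1}$ (whether step $t-1$ was itself an $\addi$ step, which preserves both the active cell and its encoding, or was an $\addii/\addiii$ step, in which case the new active cell is created with encoding $a_{t-1}$); while the remaining operations always force $a_t \ne a_{t-1}$ by creating a new row, new column, or both whose encoding strictly differs from the previous value.

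Granted the structural observation, the lemma follows immediately. The steps at which a new non-zero entry is created are exactly $t = 1$ together with those $t \ge 2$ with $a_t \ne a_{t-1}$, and these are precisely the starting positions of the maximal constant runs of $\alpha$. Thus the $s$-th such step, at which the cell $(i,j)$ with $T_A(i,j) = s$ is created, coincides with the start of the $s$-th run of $\alpha$; throughout that entire run---of length $\ell_s$, which is by definition the $s$-th coordinate of $\RLR(\alpha)$---the active cell remains $(i,j)$ and is incremented $\ell_s - 1$ further times by $\addi$ operations. Therefore $A_{ij} = \ell_s$, and comparing $s$-th coordinates for all $s \in [1,\nz(A)]$ yields $\Seq(A) = \RLR(\Gamma(A))$, as claimed.
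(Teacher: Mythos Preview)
Your proposal is correct and follows the same route as the paper: both arguments amount to reading off the lemma from the construction rules of \cite{dp}. The paper's own proof is the single sentence ``This is a straightforward consequence of the construction rules given by Dukes and Parviainen~\cite{dp},'' whereas you have written out explicitly why that is so---identifying the $\addi$ steps with the repeated letters of the ascent sequence and the $\addii/\addiii$ steps with the starts of maximal runs.
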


\begin{proof}
  This is a straightforward consequence of the construction rules
  given by Dukes and Parviainen~\cite{dp}.
\end{proof}

For a matrix $A \in \Comp_n$ define $\Card(A)$ as the matrix obtained
from $A$ by taking the cardinality of each of its entries. Note that
$A\mapsto\Card(A)$ is a surjection from $\Par_n$ onto $\robbieclass_n$.
Define $\setsystem(A)$ as the ordered set partition $(X_1,\dots ,
X_{\nz(A)})$, where $X_t=A_{ij}$ for $t=T_{\Card(A)}(i,j)$. Finally,
define $\jumpmap:\Comp_n\to\Mad_n$ by
$$\jumpmap(A) = \bigl(\,\Gamma(\Card(A)),\,\setsystem(A)\,\bigr).
$$

\begin{example}
  Let us calculate $\jumpmap(A)$ for\smallskip
  $$
  A = \Mat{
    \{3,8\}   & \{6\}     & \emptyset \\
    \emptyset & \{2,5,7\} & \emptyset \\
    \emptyset & \emptyset & \{1,4\}
  }.
  $$
  We have
  $$
  \Card(A) = \Mat{ 
    2 & 1 & 0 \\
    0 & 3 & 0 \\
    0 & 0 & 2
  };\quad
  T_{\Card(A)} = \Mat{ 
    1 & 3 & 0 \\
    0 & 2 & 0 \\
    0 & 0 & 4
  }
  $$
  and
  \begin{align*}
    \jumpmap(A)
    &= \big(\,\Gamma(\Card(A)),\, \setsystem(A)\,\big) \\
    &= \big(\,(0,0,1,1,1,0,2,2),\, \{3,8\}\{2,5,7\}\{6\}\{1,4\}\,\big).
  \end{align*}
\end{example}

We now define a map $\duckmap:\Mad_n\to\Comp_n$. For
$(w,\chi)\in\Mad_n$ with $\chi=(X_1,\dots,X_k)$ let $\duckmap(w,\chi)
= A$, where $A_{ij}=X_t$, $t=T_B(i,j)$ and $B=\Gamma^{-1}(w)$. It is
easy to verify that $\jumpmap(\Comp_n)\subseteq\Mad_n$,
$\duckmap(\Mad_n)\subseteq\Comp_n$,
$\duckmap(\jumpmap(w,\chi))=(w,\chi)$ for $(w,\chi)\in\Mad_n$, and
$\jumpmap(\duckmap(A))=A$ for $A\in\Comp_n$. Thus we have the following
theorem.

\begin{theorem}
  The map $\jumpmap: \Comp_n \to \Mad_n$ is a bijection and
  $\duckmap$ is its inverse.
\end{theorem}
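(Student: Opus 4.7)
The plan is to establish the theorem by verifying the four assertions sketched in the paragraph preceding it: (i) $\jumpmap(\Comp_n)\subseteq\Mad_n$, (ii) $\duckmap(\Mad_n)\subseteq\Comp_n$, (iii) $\duckmap\circ\jumpmap=\mathrm{id}_{\Comp_n}$, and (iv) $\jumpmap\circ\duckmap=\mathrm{id}_{\Mad_n}$. The two substantive ingredients I will invoke are the Dukes--Parviainen bijection $\Gamma:\robbieclass_n\to\Ascent_n$ and Lemma~\ref{olem}, which identifies $\Seq(A)$ with $\RLR(\Gamma(A))$ for $A\in\robbieclass_n$; everything else reduces to unwinding definitions.

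I would begin with (i). For $A\in\Comp_n$ the matrix $\Card(A)$ is upper triangular with non-negative integer entries summing to $n$, and inherits the no-empty-row-or-column condition from $A$, so $\Card(A)\in\robbieclass_n$ and hence $\Gamma(\Card(A))\in\Ascent_n$. The tuple $\setsystem(A)=(X_1,\dots,X_{\nz(\Card(A))})$ is by construction an ordered set partition of $[1,n]$, and its sequence of cardinalities is exactly $\Seq(\Card(A))$; Lemma~\ref{olem} then rewrites this as $\RLR(\Gamma(\Card(A)))$, which places $\setsystem(A)$ in $\binom{[1,n]}{\RLR(\Gamma(\Card(A)))}$ and proves $\jumpmap(A)\in\Mad_n$.

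Next I would dispatch (ii) and (iii) together, as both are immediate from the definitions. For (ii), writing $B=\Gamma^{-1}(w)$ and $\chi=(X_1,\dots,X_k)$, the matrix $A=\duckmap(w,\chi)$ inherits its upper-triangular shape and no-empty-row-or-column property from $B$, while its non-empty entries are exactly the parts of $\chi$ and so partition $[1,n]$. For (iii), if $(w,\chi)=\jumpmap(A)$ then $\Gamma^{-1}(w)=\Card(A)$, so $\duckmap(w,\chi)$ has $(i,j)$-entry $\chi_{T_{\Card(A)}(i,j)}$, which is $A_{ij}$ by the very definition of $\setsystem$.

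The only step where a computation is actually required is (iv), and this is where I expect the main obstacle. Given $(w,\chi)\in\Mad_n$, set $B=\Gamma^{-1}(w)$ and $A=\duckmap(w,\chi)$. The heart of the matter is to show $\Card(A)=B$, because then $\Gamma(\Card(A))=w$ and $\setsystem(A)$ is $\chi$ re-indexed by the identical creation times $T_{\Card(A)}=T_B$, yielding $\jumpmap(A)=(w,\chi)$. The equality of matrices is chased entry-wise via
\[ |A_{ij}|=|X_{T_B(i,j)}|=\RLR(w)_{T_B(i,j)}=\Seq(B)_{T_B(i,j)}=B_{ij}, \]
where I use, in order, the definition of $\duckmap$, the defining property $\chi\in\binom{[1,n]}{\RLR(w)}$, Lemma~\ref{olem}, and the definition of $\Seq$. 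This short chain is the one place where the run-length record built into $\Mad_n$ is genuinely indispensable; without it, the sizes of the parts of $\chi$ would decouple from the entries of $B$ and the whole argument would collapse.
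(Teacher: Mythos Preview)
Your proposal is correct and follows precisely the approach the paper indicates: the paper merely asserts that the four statements $\jumpmap(\Comp_n)\subseteq\Mad_n$, $\duckmap(\Mad_n)\subseteq\Comp_n$, $\duckmap\circ\jumpmap=\mathrm{id}$, and $\jumpmap\circ\duckmap=\mathrm{id}$ are ``easy to verify,'' and you have supplied the verification, using Lemma~\ref{olem} exactly where it is needed (to match block sizes of $\chi$ against entries of $\Gamma^{-1}(w)$). Apart from the harmless convention $X_0=\emptyset$ implicit in your chain for (iv), there is nothing to add.
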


Next we will give a bijection $\madmap$ from $\Mad_n$ to $\Poset_n$, the set of
$(2+2)$-free posets on $[1, n]$.
Recall that a poset $P$ is $(2+2)$-free if it does not contain
an induced subposet that is isomorphic to $2+2$, the union of two disjoint
$2$-element chains.
Let
$(\alpha,\chi)\in \Mad_n$ with $\chi=(X_1,\dots ,X_{\ell})$.  Assuming that
$X_i=\{x_1,\dots, x_k\}_<$ define the word $\hat{X_i}=x_1\dots x_k$
and let $\hat{\chi}= \hat{X_1}\dots\hat{X_{\ell}}$. From this,
$\hat{\chi}$ will be a permutation of the elements $[1, n]$.  Let
$\hat{\chi}(i)$ be the $i$th letter of this permutation.

For $(\alpha,\chi)\in \Mad_n$ define $\madmap(\alpha,\chi)$ as follows:
Construct the poset element by element according to the construction
rules of \cite{bcdk} on the ascent sequence $\alpha$.  Label with
$\hat{\chi}(i)$ the element inserted at step $i$.

The inverse of this map is also straightforward to state and relies on
the following crucial observation ~\cite[Prop. 3]{dkrs} concerning
indistinguishable elements in an unlabeled $(2+2)$-free poset.  Two
elements in a poset are called \emph{indistinguishable} if they obey the same
relations relative to all other elements.

Let $P$ be an unlabeled poset that is constructed from the ascent
sequence $\alpha=(a_1,\dots , a_n)$. Let $p_i$ and $p_j$ be the
elements that were created during the $i$th and $j$th steps of the
construction given in \cite[Sect. 3]{bcdk}. The elements $p_i$ and
$p_j$ are indistinguishable in $P$ if and only if
$a_{i}=a_{i+1}=\dots =a_j$.

Define $\pirate: \Poset_n \to \Mad_n $ as follows: Given $P \in
\Poset_n$ let $\pirate(P)=(\alpha,\chi)$ where $\alpha$ is the ascent sequence
that corresponds to the poset $P$ with its labels removed, and $\chi$
is the sequence of sets $(X_1,\dots,X_m)$ where $X_i$ is the set of
labels that corresponds to all the indistinguishable elements of $P$
that were added during the $i$th run of identical elements in the
ascent sequence.

\begin{example}
  Consider the $(2+2)$-free poset
  $$ P = \;
  \begin{tikzpicture}[xscale=0.85, baseline=53pt]
    \tikzstyle{every node} = [font=\footnotesize];
    \tikzstyle{disc} = [ 
      circle,fill=black,draw=black, 
      minimum size=3.3pt, inner sep=0pt];
    \path
    node [disc] (1) at (3, 3) {}
    node [disc] (2) at (2, 2) {}
    node [disc] (3) at (3, 1) {}
    node [disc] (4) at (4, 3) {}
    node [disc] (5) at (3, 2) {}
    node [disc] (6) at (4, 1) {} 
    node [disc] (7) at (1, 2) {}
    node [disc] (8) at (2, 1) {};
    \draw 
    (1) node[above] {1} -- (2) node[left]  {2}
    (1)                 -- (5) node[left]  {5}
    (1)                 -- (6) node[below] {6}
    (1)                 -- (7) node[left]  {7}
    (2)                 -- (3) node[below] {3}
    (2)                 -- (8) node[below] {8}
    (4) node[above] {4} -- (2)
    (4)                 -- (5)
    (4)                 -- (6)
    (4)                 -- (7)
    (5)                 -- (3)
    (5)                 -- (8)
    (7)                 -- (3)
    (7)                 -- (8)
    ;
  \end{tikzpicture}\in\Poset_8.
  $$ The unlabeled poset corresponding to $P$ has ascent sequence
  $(0,0,1,1,1,0,2,2)$. There are four runs in this ascent
  sequence. The first run of two 0s inserts the elements $3$ and $8$, so we
  have $X_1=\{3,8\}$.  Next the run of three 1s inserts elements $2$, $5$
  and $7$, so $X_2=\{2,5,7\}$. The next run is a run containing a single
  $0$, and the element inserted is $6$, so $X_3=\{6\}$. The final run
  of two 2s inserts elements $1$ and $4$, so $X_4=\{1,4\}$. Thus we
  have
  $$\pirate(P) = \big(\,(0,0,1,1,1,0,2,2),\, \{3,8\}\{2,5,7\}\{6\}\{1,4\}\,\big).
  $$
\end{example}

It is straightforward to check that $\madmap$ and $\pirate$ are each
others inverses. Consequently, we have the following theorem.

\begin{theorem}\label{thm:madmap}
  The map $\madmap: \Mad_n \to \Poset_n$ is a bijection and $\pirate$ 
  is its inverse.
\end{theorem}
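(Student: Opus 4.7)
The plan is to check the four standard assertions that make $\madmap$ and $\pirate$ mutually inverse bijections: that $\madmap$ lands in $\Poset_n$, that $\pirate$ lands in $\Mad_n$, and that the two compositions are identities. The principal tools are the Bousquet-M\'elou--Claesson--Dukes--Kitaev bijection between $\Ascent_n$ and unlabelled $(2+2)$-free posets on $n$ points, together with the criterion cited from \cite{dkrs}: two elements inserted at steps $i<j$ of the construction are indistinguishable in the resulting poset precisely when $a_i = a_{i+1} = \dots = a_j$.

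For well-definedness, note first that the construction underlying $\madmap$ produces a $(2+2)$-free poset on $n$ points, and relabelling its $i$-th inserted element by $\hat\chi(i)$, a permutation of $[1,n]$, leaves the poset $(2+2)$-free, so $\madmap(\alpha,\chi)\in\Poset_n$. Conversely, given $P\in\Poset_n$, stripping labels yields an unlabelled $(2+2)$-free poset with a unique ascent-sequence preimage $\alpha\in\Ascent_n$, and the indistinguishability criterion shows that the maximal groups of consecutively inserted indistinguishable elements of $P$ have sizes exactly equal to the run lengths of $\alpha$, that is, $\RLR(\alpha)$. Hence the ordered tuple $\chi=(X_1,\dots,X_m)$ produced by $\pirate$ lies in $\binom{[1,n]}{\RLR(\alpha)}$, so $(\alpha,\chi)\in\Mad_n$.

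For $\pirate\circ\madmap=\mathrm{id}_{\Mad_n}$: starting from $(\alpha,\chi)$, $\madmap$ produces a labelled poset whose unlabelled shape has BCDK-preimage $\alpha$, and whose elements added during the $i$-th run of $\alpha$ carry exactly the letters of $\hat X_i$, namely $X_i$; so $\pirate$ returns $(\alpha,\chi)$. The slightly subtler identity is $\madmap\circ\pirate=\mathrm{id}_{\Poset_n}$: given $P$ with $\pirate(P)=(\alpha,\chi)$, feeding $\alpha$ through the BCDK construction and relabelling by $\hat\chi$ must return the specific labelled poset $P$, not merely an isomorphic one. The main obstacle here is deciding which label should be attached to which point within a single run, and it is resolved by the indistinguishability criterion: all elements inserted during a single run of $\alpha$ are mutually indistinguishable in $P$, so any bijection between the labels in $X_i$ and those elements yields the same labelled poset. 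In particular the ``list in increasing order'' convention shared by $\madmap$ and $\pirate$ returns $P$ exactly, which completes the argument.
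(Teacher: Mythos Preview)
Your proposal is correct and follows precisely the approach the paper intends: the paper itself simply asserts that ``it is straightforward to check that $\madmap$ and $\pirate$ are each others inverses,'' having set up the indistinguishability criterion from \cite{dkrs} immediately beforehand as the key tool. You have spelled out that straightforward check in full, including the one genuinely non-obvious point---that any assignment of the labels in $X_i$ to the elements inserted during the $i$th run yields literally the same labelled poset, since indistinguishable elements satisfy identical order relations---which is exactly what the cited criterion is there to handle.
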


Let $\C_n$ be the collection of composition matrices $M$ on $[1,n]$
with the following property: in every row of $M$, the entries are
increasing from left to right. An example of a matrix in $\C_8$ is
$$M'=\Mat{ 
  \{4,6\}   & \{8\}     & \emptyset \\
  \emptyset & \{1\}     & \{7\} \\
  \emptyset & \emptyset & \{2,3,5\} 
}.
$$
Every matrix $M \in \C_n$ may be written as a unique pair $(w(M),\hat{\chi}(M))$ where:
\begin{itemize}
\item $w(M)$ is the non-decreasing inversion table that via Theorem~8
  corresponds to $\mono(M)$, the matrix in $\Mono_n$ that is formed
  from $M$ the following way: replace the entries in $M$ from left to
  right, beginning with the first row, with the values $1,\ldots,n$,
  in that order. For the example above we have
  $$\mono(M')=\Mat{ \{1,2\} & \{3\} & \emptyset \\
	\emptyset & \{4\} & \{5\} \\
	\emptyset & \emptyset & \{6,7,8\} } \in \Mono_8
  $$
  and $w(M')=(0,0,0,2,2,4,4,4)$.
\item $\chi(M)=(X_1,\ldots ,X_k)$ where $X_i$ is the union of the
  entries in row $i$ of $M$, and $\hat{\chi}(M)$ is the permutation of
  $[n]$ achieved by removing the parentheses from $\chi(M)$; see
  paragraph after Theorem 16.  For the above example,
  $\chi(M')=(\{4,6,8\},\{1,7\},\{2,3,5\})$ and
  $\hat{\chi}(M')=(4,6,8,1,7,2,3,5)$.
\end{itemize}

Given $M \in \C_n$ with $w(M)=(w_1,\ldots,w_n)$ and
$\hat{\chi}(M)=(\hat{\chi}_1,\ldots , \hat{\chi}_n)$, let us define
the sequence $\pfa(M)=(a_1,\ldots ,a_n)$ by $a_i=1+w_j$ where
$i=\hat{\chi}_j$.  
(For the small example above, we have $\pfa(M')=(3,5,5,1,5,1,3,1)$.)
Recall \cite[p. 94]{stanleyII} that a sequence
$\pfa=(a_1,\ldots,a_n)\in [n]^n$ is a {\it{parking function}} if and
only if the increasing rearrangement $b_1\leq b_2 \leq \cdots \leq
b_n$ of $a_1,\ldots,a_n$ satisfies $b_i\leq i$.

\begin{theorem}
  Matrices in $\C_n$ are in one-to-one correspondence with parking
  functions of order $n$.  The parking function that corresponds to
  the matrix $M \in \C_n$ is $(a_1,\ldots,a_n)$ where
  $a_{\hat{\chi}_j} = 1+w_j$ and $M \leftrightarrow
  (w(M),\hat{\chi}(M))$.
\end{theorem}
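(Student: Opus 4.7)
The plan is to verify directly that the map $M \mapsto \eta(M)$ defined in the theorem statement is a well-defined bijection, by explicitly constructing its inverse.

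\emph{Forward direction.} Given $M \in \C_n$ with $w(M)=(w_1,\ldots,w_n)$ and $\hat{\chi}(M)=(\hat{\chi}_1,\ldots,\hat{\chi}_n)$, the sequence $\eta(M)=(a_1,\ldots,a_n)$ is obtained by setting $a_{\hat{\chi}_j}=1+w_j$. Since $\hat{\chi}$ is a permutation of $[n]$, the multiset $\{a_i\}$ equals $\{1+w_j\}$, and because $w(M)$ is non-decreasing the sorted rearrangement is exactly $(1+w_1,\ldots,1+w_n)$. The inversion-table bound $w_j \leq j-1$ then says precisely that $b_j := 1+w_j \leq j$, so $\eta(M)$ is a parking function.

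\emph{Construction of the inverse.} Given a parking function $(a_1,\ldots,a_n)$, let $b_1 \leq \cdots \leq b_n$ be its increasing rearrangement and set $w_j := b_j - 1$; this is a non-decreasing inversion table, which under Theorem~8 produces a unique $N \in \Mono_n$. Let $y_1 < \cdots < y_k$ be the distinct values of $w$; because $w$ is non-decreasing, the positions where $w_j = y_i$ form a contiguous block $[p_i+1, p_i+k_i]$. Set $S_i := \{\ell : a_\ell = 1 + y_i\}$ (which has cardinality $k_i$) and let $\hat{\chi}_j$, for $j$ in the block, enumerate $S_i$ in increasing order. Construct $M$ from $N$ by replacing each entry $\ell$ of $N$ with $\hat{\chi}_\ell$.

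\emph{Verification that $M \in \C_n$.} Row $i$ of $N$ contains the consecutive integers $p_i+1, \ldots, p_i+k_i$, arranged left-to-right across its non-empty cells in sorted order within each cell. Under the substitution $\ell \mapsto \hat{\chi}_\ell$, these cells receive the elements of $S_i$ in the same left-to-right order, and by construction this order is increasing on $S_i$. Hence the max of any earlier non-empty cell in row $i$ is strictly less than the min of any later cell, which is exactly the defining property of $\C_n$. That the two constructions are mutually inverse is then immediate: starting from $M$, sorting $\eta(M)$ recovers $(1+w_j)$ (hence $w(M)$), while the sets $S_i$ coincide with the rows $X_i$ of $M$, whose sorted listing is $\hat{\chi}(M)$.

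The only subtle point is the recovery of $\hat{\chi}$ when $\eta(M)$ has repeated values; the correct convention of listing tied indices in increasing order is forced (and justified) by the row-increasing condition built into the definition of $\C_n$, so once this convention is isolated the bijection follows with no further calculation.
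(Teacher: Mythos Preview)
The paper does not actually give a proof of this theorem: after defining $w(M)$, $\hat{\chi}(M)$, and $\pfa(M)$ and recalling the parking-function condition, it simply states the result and moves on. Your argument supplies exactly the verification the paper omits, and it follows the route the paper's setup already points to---sorting $\pfa(M)$ recovers $1+w(M)$, the inversion-table bound $w_j\le j-1$ is precisely the parking condition, and the row-increasing requirement in $\C_n$ forces (and is forced by) the convention of listing tied indices in increasing order when reconstructing $\hat{\chi}$. The proof is correct and is essentially the intended one.
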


\subsection{Diagonal and bidiagonal composition matrices}

Consider the set of diagonal matrices $\DiComp_n$ in $\Comp_n$.  Under
the bijection $\jumpmap$ these matrices map to pairs
$(\alpha,\chi)\in\Mad_n$ where $\alpha$ is a non-decreasing ascent
sequence. Applying $\madmap$ to $\jumpmap(\DiComp_n)$ we get the
collection of $(2+2)$-free posets $P$ which have the property that
every element at level $j$ covers every element at level $j-1$, for
all levels $j$ but the first. Reading the levels from bottom to top we
get an ordered set partition of $[1,n]$. It is not hard to see that
this ordered set partition is, in fact, $\chi$. For instance,\vspace{-1ex}
$$
\Mat{
  \{4\}     & \emptyset & \emptyset \\
  \emptyset & \{1,3\}   & \emptyset \\
  \emptyset & \emptyset & \{2,5\}
}
\;\,\leftrightarrow\;\,
\big((0,1,1,2,2), \{4\}\{1,3\}\{2,5\}\big)
\;\,\leftrightarrow\;\,
\begin{tikzpicture}[xscale=0.4, yscale=0.5, baseline=23pt]
  \tikzstyle{every node} = [font=\footnotesize];
  \tikzstyle{disc} = [ 
    circle,fill=black,draw=black, 
    minimum size=3.3pt, inner sep=0pt];
  \path
  node [disc] (1) at (1, 2) {}
  node [disc] (2) at (1, 3) {}
  node [disc] (3) at (3, 2) {}
  node [disc] (4) at (2, 1) {}
  node [disc] (5) at (3, 3) {};
  \draw 
  (2) node[above] {2} -- (1) node[left]  {1}
  (1)                 -- (5) node[above] {5}
  (2)                 -- (3) node[right] {3}
  (4) node[below] {4} -- (1)
  (4)                 -- (3)
  (5)                 -- (3)
  ;
\end{tikzpicture}.
$$ It follows that there are exactly $k!S(n,k)$ diagonal composition
matrices of size $n$ and dimension $k$, where $S(n,k)$ is the number of
partitions of an
$n$ element set into $k$ parts (a \emph{Stirling number of the second
kind}). For bidiagonal composition matrices the situation is a bit more
complicated:

\begin{proposition}
  We have
  $$
  \sum_{n\geq 0}\sum_{A\in\BiComp_n} q^{\dim(A)}\frac{x^n}{n!} = 
  \frac{qe^{2x} - qe^x - 1}
       {(1-q)qe^{2x} + 2q^2e^x - q^2 - q - 1},
  $$
  where $\BiComp_n$ is the collection of bidiagonal matrices in $\Comp_n$.
\end{proposition}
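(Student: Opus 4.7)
The plan is to encode bidiagonal composition matrices as sequences of labeled cells satisfying local constraints, translate to exponential generating functions, and apply the transfer-matrix method.

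First I would observe that a bidiagonal matrix $A\in\BiComp_n$ of dimension $k$ is completely determined by the sequence of entries $c_1,c_2,\ldots,c_{2k-1}$ with $c_{2i-1}=A_{ii}$ the diagonal entry of row $i$ and $c_{2i}=A_{i,i+1}$ the superdiagonal entry; these subsets form an ordered set partition of $[1,n]$. The row non-emptiness condition for row $i<k$ says $c_{2i-1}$ and $c_{2i}$ are not both empty, and the column non-emptiness condition for column $j>1$ says $c_{2j-2}$ and $c_{2j-1}$ are not both empty; together these are exactly the assertion that no two consecutive cells of $c_1,\ldots,c_{2k-1}$ can both be empty. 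The remaining row and column conditions force $c_1$ and $c_{2k-1}$ to be non-empty.

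Next I would apply the standard EGF rule for ordered set-partitions: a cell declared non-empty contributes $e^x-1$, a cell declared empty contributes $1$, and I attach an extra factor $q$ to each diagonal cell to track $\dim(A)$. Summing over legal empty/non-empty patterns is then handled by a transfer matrix with states $f$ (non-empty) and $e$ (empty):
$$T_S = \begin{pmatrix} e^x-1 & e^x-1 \\ 1 & 0 \end{pmatrix}, \qquad T_D = q T_S,$$
where the lower-right zero forbids the forbidden transition $e\to e$. Since $c_1$ is a forced non-empty diagonal cell, the state vector after processing $c_1$ is $w_0 = (q(e^x-1),\,0)^T$, and each subsequent application of $T_S$ followed by $T_D$ advances through the cells $c_{2i}, c_{2i+1}$. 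Therefore the EGF of dimension-$k$ matrices (weighted by $q^k$) equals the first coordinate of $(T_DT_S)^{k-1}w_0$, so summing over $k\ge 1$ gives the first coordinate of $(I-T_DT_S)^{-1}w_0$; the inverse exists as a formal power series since the entries of $T_DT_S$ have no constant term in $x$.

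Finally, direct computation yields
$$T_D T_S = q(e^x-1)\begin{pmatrix} e^x & e^x-1 \\ 1 & 1 \end{pmatrix},$$
so $\det(I - T_DT_S) = 1 - q(e^x-1)(e^x+1) + q^2(e^x-1)^2$, and the first coordinate of $(I-T_DT_S)^{-1}w_0$ equals $q(e^x-1)(1-q(e^x-1))/\det(I-T_DT_S)$. Adding the $n=0$ contribution of $1$ from the empty matrix and clearing a common sign in numerator and denominator rewrites the expression as the stated rational function. The main obstacle is the opening combinatorial step: recognizing that the row and column non-emptiness conditions on a bidiagonal matrix reduce to the purely local ``no two consecutive empty cells'' constraint with non-empty endpoints. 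Once this is established, the remaining generating-function calculation is mechanical.
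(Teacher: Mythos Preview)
Your proof is correct. Both you and the paper use the transfer-matrix method, but the decompositions differ. The paper first identifies a bidiagonal composition matrix with a pair $(B,\chi)$, where $B$ is a \emph{binary} bidiagonal matrix recording which cells are non-empty and $\chi$ is an ordered set partition filling those cells; it then computes the ordinary generating function $F(q,x)$ for such $B$ via a $5$-state column-by-column automaton and substitutes $x\mapsto e^x-1$. You instead read the $2k-1$ diagonal and superdiagonal cells as a single sequence, observe that the row/column conditions are exactly ``no two consecutive empty cells, endpoints non-empty,'' and run a $2$-state cell-by-cell automaton directly at the exponential-generating-function level. Your route is a bit more economical (smaller state space, no intermediate OGF), while the paper's pairing $(B,\chi)$ makes the species-style composition explicit; the final algebra is the same up to the sign change you note.
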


\begin{proof}
  Let $\mathcal{B}_n$ be the collection of binary bidiagonal matrices
  in $\robbieclass_n$. A matrix $A\in\BiComp_n$ can in a natural and
  simple way be identified with a pair $(B,\chi)$ where $B\in
  \mathcal{B}_k$, $k=\nz(\Card(A))$, and $\chi$ is an ordered set
  partition of $[1,n]$:
  $$
  \Mat{
    \{3,8\}   & \{6\}     & \emptyset \\
    \emptyset & \{2,5,7\} & \emptyset \\
    \emptyset & \emptyset & \{1,4\}
  }\;\,\leftrightarrow\;\,
  \left(
  \Mat{
    1 & 1 & 0 \\
    0 & 1 & 0 \\
    0 & 0 & 1
  },
  \{3,8\}\{6\}\{2,5,7\}\{1,4\}
  \right)
  $$ Thus, if $F(q,x)$ is the ordinary generating function for
  matrices in $\mathcal{B}_n$ counted by dimension and size, then
  $F(q,e^x-1)$ is the exponential generating function we require.
  This is because $F(q,x)$ is also the generating
  function for pairs $(B,\pi)$ where $B\in\mathcal{B}_n$ and
  $\pi\in\sym_n$, and $e^x-1$ is the exponential generating function
  for non-empty sets.

  We now derive $F(q,x)$ using the transfer-matrix method. We grow the
  matrices in $\mathcal{B}_n$ from left to right by adding new
  columns, and within a column we add ones from top to bottom:\\[-4ex]
  $$
  \begin{tikzpicture}[baseline=-2pt,>=stealth', yscale=0.9, xscale=1.3]
    \tikzstyle{every node} = [font=\scriptsize];
    \path
    node[font=\normalsize] (e)  at (0, 0) {$\epsilon$}
    node[inner sep=1.5pt]  (1)  at (1, 0) {\one}
    node[inner sep=0pt]    (01) at (2, 1) {\two 0,1,}
    node[inner sep=0pt]    (11) at (3, 0) {\two 1,1,}
    node[inner sep=0pt]    (10) at (2,-1) {\two 1,0,}
    ;
    \path[->] (e)  edge               node[above=-1pt] {$q$}             (1);
    \path[->] (1)  edge               node[pos=0.35, above=-0.1pt] {$q$} (01);
    \path[->] (01) edge[loop above]   node{$q$}                          (01);
    \path[->] (1)  edge               node[pos=0.35, below=-0.1pt] {$q$} (10);
    \path[->] (10) edge[loop below]   node{$q$}                          (10);
    \path[->] (01) edge               node[left=-1pt]{$q$}               (10);
    \path[->] (10) edge[bend left=15] node[pos=0.35, above=-0.1pt] {$1$} (11);
    \path[->] (11) edge               node[pos=0.35, above=-0.1pt] {$q$} (01);
    \path[->] (11) edge[bend left=15] node[pos=0.35, below=-0.1pt] {$q$} (10);
  \end{tikzpicture}
  \qquad\;
  M = \Mat{
     0        & q         & 0         & 0         & 0 \\
     0        & 0         & q         & q         & 0 \\ 
     0        & 0         & q         & q         & 0 \\ 
     0        & 0         & 0         & q         & 1 \\ 
     0        & 0         & q         & q         & 0
  }\vspace{-1ex}
  $$ Here $\epsilon$ is the empty matrix; $\one$ is the $1\times 1$
  identity matrix; \!\raisebox{3pt}{\scalebox{0.8}{\two 0,1,}}\!
  denotes any matrix in $\mathcal{B}_n$ of dimension 2 or more whose
  bottom most entries in the last column are 0 and 1; etc. Calculating
  the first entry in $(1-xM)^{-1}[1\,1\,1\,0\,1]^T$ we find that
  $$
  F(q,x) = \sum_{n\geq 0}\sum_{A\in\mathcal{B}_n} q^{\dim(A)}x^n =
  \frac{qx^2 + qx - 1}{(1 - q)qx^2 + 2qx - 1},
  $$ and on simplifying $F(q,e^x-1)$ we arrive at the claimed
  generating function.
\end{proof}

\section{The number of $(2+2)$-free posets on $[1,n]$}\label{pcp}

Let us consider \emph{plane (2+2)-free posets} on $[1,n]$. That is,
$(2+2)$-free posets on $[1,n]$ with a canonical embedding in the
plane. For instance, these are six \emph{different} plane $(2+2)$-free
posets on $[1,3]=\{1,2,3\}$:
$$
\V{1}{2}{3}\qquad
\V{1}{3}{2}\qquad
\V{2}{1}{3}\qquad
\V{2}{3}{1}\qquad
\V{3}{1}{2}\qquad
\V{3}{2}{1}
$$ By definition, if $u_n$ is the number of unlabeled $(2+2)$-free
posets on $n$ nodes, then $u_nn!$ is the number of plane $(2+2)$-free
posets on $[1,n]$. In other words, we may identify the set of plane
$(2+2)$-free posets on $[1,n]$ with the Cartesian product
$\UP_n\times\sym_n$, where $\UP_{n}$ denotes the set of unlabeled
$(2+2)$-free posets on $n$ nodes and $\sym_n$ denotes the set of
permutations on $[1,n]$. We shall demonstrate the isomorphism
\begin{equation}\label{eq:plane}
  \bigcup_{\pi\in\sym_n}\Poset(\Cyc(\pi)) \,\simeq\, \UP_n\times\sym_n,
\end{equation}
where $\Cyc(\pi)$ is the set of (disjoint) cycles of $\pi$ and
$\Poset(\Cyc(\pi))$ is the set of $(2+2)$-free posets on those cycles.
As an illustration we consider the case $n=3$. On the right-hand side
we have $|\UP_3\times\sym_3|=|\UP_3||\sym_3| = 5\cdot 6=30$ plane
$(2+2)$-free posets. Taking the cardinality of the left-hand side we get
\begin{gather*}
  |\Poset\{\raisebox{0.1ex}{$\scriptstyle (1),(2),(3)$}\}| +
  |\Poset\{\raisebox{0.1ex}{$\scriptstyle (1), (2 3)$}\}| +
  |\Poset\{\raisebox{0.1ex}{$\scriptstyle (1 2), (3)$}\}| +
  |\Poset\{\raisebox{0.1ex}{$\scriptstyle (2), (1 3)$}\}| +
  |\Poset\{\raisebox{0.1ex}{$\scriptstyle (1 2 3)$}\}| +
  |\Poset\{\raisebox{0.1ex}{$\scriptstyle (1 3 2)$}\}| \\
  = |\Poset_3| +  3|\Poset_2| + 2|\Poset_1|
  = 19 + 3\cdot 3 + 2\cdot 1 = 30.
\end{gather*}

Bousquet-M\'elou et al.\ ~\cite{bcdk} gave a bijection $\Psi$ from
$\UP_n$ to $\Ascent_n$, the set of ascent sequences of length
$n$. Recall also that in Theorem~\ref{thm:madmap} we gave a bijection
$\phi$ from $\Poset_n$ to $\Mad_n$. Of course, there is nothing
special about the ground set being $[1,n]$ in
Theorem~\ref{thm:madmap}; so, for any finite set $X$, the map
$\madmap$ can be seen as a bijection from $(2+2)$-free posets on $X$
to the set
$$ 
\Mad(X) = \bigcup_{\alpha\in \Ascent_{|X|}}\{ \alpha \}\times { X \choose \RLR(\alpha) }.
$$ 
In addition, the fundamental transformation~\cite{cf} is a
bijection between permutations with exactly $k$ cycles and
permutations with exactly $k$ left-to-right minima. Putting these
observations together it is clear that to show \eqref{eq:plane} it
suffices to show
\begin{equation}\label{eq:plane-var}
  \bigcup_{\pi\in\sym_n}\Mad(\LMin(\pi)) \,\simeq\, \Ascent_n\times\sym_n,
\end{equation}
where $\LMin(\pi)$ is the set of segments obtained by breaking $\pi$
apart at each left-to-right minima. For instance, the left-to-right minima of 
$\pi=5731462$ are $5$, $3$ and $1$; so $\LMin(\pi)=\{57,3,1462\}$. 

Let us now prove \eqref{eq:plane-var} by giving a bijection $h$ from
the left-hand side to the right-hand side. To this end, fix a
permutation $\pi\in\sym_n$ and let $k=|\LMin(\pi)|$ be the number of
left-to-right minima in $\pi$. Assume that $\alpha=(a_1,\dots,a_k)$ is
an ascent sequence in $\Ascent_k$ and that $\chi=(X_1,\dots,X_r)$ is
an ordered set partition in ${\LMin(\pi) \choose \RLR(\alpha)}$. To
specify the bijection $h$ let
$$h(\alpha, \chi) = (\beta,\tau)
$$ 
where $\beta\in\Ascent_n$ and $\tau\in\sym_n$ are defined in the
next paragraph.

For each $i\in [1,r]$, first order the blocks of $X_i$ decreasingly
with respect to first (and thus minimal) element, then concatenate the
blocks to form a word $\hat{X}_i$. Define the permutation $\tau$ as
the concatenation of the words $\hat{X}_i$:
$$\tau=\hat{X}_1\dots \hat{X}_k.
$$ Let $i_1=1$, $i_2=i_1+|X_1|$, $i_3=i_2+|X_2|$, etc. By definition,  
these are the indices where the ascent sequence $\alpha$
changes in value. Define $\beta$ by
$$\RLE(\beta) = (a_{i_1},x_1)\dots (a_{i_k},x_k),\text{ where }x_i=|\hat{X}_i|. 
$$

Consider the permutation $\pi=\mathrm{A9B68D4F32C175E}\in\sym_{15}$
(in hexadecimal notation). Then $\LMin(\pi)=\{\mathrm{A, 9B, 68D, 4F, 3,
  2C, 175E}\}$. Assume that 
\begin{align*}
  \alpha &= (0,0,1,2,2,2,0);\\
  \chi   &= \{\mathrm{2C, 68D}\}\{\mathrm{9B}\}\{\mathrm{3, 175E, 4F}\}\{\mathrm{A}\}.
\end{align*}
Then we have $\hat{X}_1=\mathrm{68D2C}$, $\hat{X}_2=\mathrm{9B}$,
$\hat{X}_3=\mathrm{4F3175E}$ and $\hat{X}_4=\mathrm{A}$. Also, $i_1=1$,
$i_2=1+2=3$, $i_3=3+1=4$ and $i_4=4+3=7$. Consequently,
$$
\begin{tikzpicture}[every node/.style={anchor=base}]
  \matrix[matrix of math nodes,
    column sep=0pt, row sep=1.4ex, inner sep=0.1pt, minimum size=1.3ex] {
    \beta\! &\;&\node[anchor=base west]{=};& 
    \,(\hspace{0.5pt}&
    0 &,& 0 &,& 0 &,& 0 &,& 0 &,& 1 &,& 1 &,& 2 &,& 
    2 &,& 2 &,& 2 &,& 2 &,& 2 &,& 2 &,& 0 &\hspace{-0.6pt});\\
    \tau\!  &\;&\node[anchor=base west]{=};&  
    & 6 && 8 && \mathrm{D} && 2 && \mathrm{C} && 9 && \mathrm{B} && 4 
    && \mathrm{F} && 3 && 1 && 7 && 5 && \mathrm{E} && \mathrm{A} &.\\
  };
\end{tikzpicture}
$$

It is clear how to reverse this procedure: Split $\tau$ into segments
according to where $\beta$ changes in value when reading from left to
right. With $\tau$ as above we get 
$$(68\mathrm{D}2\mathrm{C}, 9\mathrm{B}, 4\mathrm{F}3175\mathrm{E},\mathrm{A})
=(\hat{X}_1, \hat{X}_2, \hat{X}_3, \hat{X}_4)
$$ We have thus recovered $\hat{X}_1$, $\hat{X}_2$, etc. Now
$X_i=\LMin(\hat{X}_i)$, and we thus know $\chi$. It only remains to recover $\alpha$.
Assume that $\RLE(\beta)=(b_1,x_1)\dots (b_k,x_k)$, then $\RLE(\alpha) =
(b_1,|X_1|)\dots(b_k,|X_k|)$. This concludes the proof
of~\eqref{eq:plane-var}. Let us record this result.

\begin{theorem} \label{th:gen}
  The map $h:\cup_{\pi\in\sym_n}\Mad(\LMin(\pi))\to\Ascent_n\times\sym_n$ is a bijection.
\end{theorem}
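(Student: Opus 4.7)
The plan is to prove the theorem by explicitly constructing the inverse of $h$ and verifying well-definedness of both directions. The forward map $h$ has already been described in the text preceding the theorem statement; the strategy is therefore to formalize the reverse procedure also sketched there and to check the minor technical points this leaves open.

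First I would verify that $h$ lands in $\Ascent_n \times \sym_n$. That $\tau$ is a permutation of $[1,n]$ is immediate, since $\hat{X}_1, \ldots, \hat{X}_k$ are disjoint words whose letters collectively exhaust $\bigcup_i X_i = \LMin(\pi)$, i.e.\ all of $[1,n]$. That $\beta$ is an ascent sequence follows from three observations: $b_1 = a_{i_1} = a_1 = 0$; inflating each value $a_{i_j}$ into a run of length $x_j$ introduces no new ascents inside the run; and the ascent count of any prefix of $\beta$ ending at the start of run $j$ therefore equals the ascent count of the corresponding prefix of $\alpha$. The ascent-sequence inequality on $\beta$ at the first position of a new run thus reduces to the corresponding inequality already satisfied by $\alpha$, while the inequality inside a run is trivial since $\asc$ is non-decreasing.

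Next I would define the inverse $h^{-1}$ as suggested in the excerpt: split $\tau$ into maximal segments $\hat{X}_1, \ldots, \hat{X}_k$ matching the maximal constant runs of $\beta$; set $X_i = \LMin(\hat{X}_i)$ and $\chi = (X_1, \ldots, X_k)$; recover $\pi$ as the unique permutation obtained by listing the segments in $\bigcup_i X_i$ in decreasing order of their first element; and recover $\alpha$ from its run-length encoding $\RLE(\alpha) = (b_1, |X_1|) \cdots (b_k, |X_k|)$, where $b_1, \ldots, b_k$ are the distinct consecutive values of $\beta$.

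The main obstacle is to verify that $\LMin(\hat{X}_i) = X_i$, so that splitting $\tau$ really does recover the original blocks. This rests on the elementary fact that within any segment $B \in \LMin(\pi)$, every non-initial element is strictly greater than its first element (otherwise such an element would itself be a left-to-right minimum of $\pi$, contradicting the definition of $\LMin$). Since the blocks within $\hat{X}_i$ are concatenated in decreasing order of first element, the first elements form a decreasing sequence in which each is the minimum of its own block; hence these are exactly the left-to-right minima of $\hat{X}_i$, giving $\LMin(\hat{X}_i) = X_i$. With this in hand, both $h \circ h^{-1}$ and $h^{-1} \circ h$ are readily seen to be the identity, and the theorem follows.
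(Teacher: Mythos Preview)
Your proposal is correct and follows exactly the approach in the paper: construct $h$, describe the explicit inverse (split $\tau$ along the runs of $\beta$, recover $X_i=\LMin(\hat X_i)$, rebuild $\alpha$ from the block sizes), and check that the two procedures undo each other. You supply more detail than the paper does --- in particular the verification that $\beta$ is an ascent sequence (via the observation that $\alpha$ and $\beta$ share the same sequence of run values, hence the same ascent counts at run boundaries) and the argument that $\LMin(\hat X_i)=X_i$ --- but these are precisely the points the paper leaves implicit.
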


As previously explained, \eqref{eq:plane} also follows from this proposition.
Let us now use \eqref{eq:plane} to derive an exponential generating
function $L(t)$ for the number of $(2+2)$-free posets on
$[1,n]$. Bousquet-M\'elou et al.\ ~\cite{bcdk} gave the following \emph{ordinary}
generating function for \emph{unlabeled} $(2+2)$-free posets on $n$
nodes:
\begin{align*}
P(t) 
&=\sum_{n\ge 0} \ \prod_{i=1}^n \left( 1-(1-t)^i\right) \\
&= 1+ t+ 2t^2+ 5t^3+ 15t^4+ 53t^5+ 217t^6+ 1014t^7+ 5335t^8+ O(t^{9}).  
\end{align*}
This is, of course, also the exponential generating function for plane
$(2+2)$-free posets on $[1,n]$. Moreover, the exponential generating
function for cyclic permutations is $\log(1/(1-t))$. On taking the
union over $n\geq 0$ of both sides of \eqref{eq:plane} it follows
that $L(\log(1/(1-t))) = P(t)$; so $L(t) = P(1-e^{-t})$.

\begin{corollary}
  The exponential generating function for $(2+2)$-free posets is
  \begin{align*}
    L(t)
    &=\sum_{n\ge 0} \ \prod_{i=1}^n \left( 1-e^{-ti}\right) \\
    &= 1+ t+ 3\frac{t^2}{2!}+ 19\frac{t^3}{3!}+ 207\frac{t^4}{4!}
    + 3451\frac{t^5}{5!}+ 81663\frac{t^6}{6!}+ 2602699\frac{t^7}{7!}+ O(t^8).
\end{align*}
\end{corollary}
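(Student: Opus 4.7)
The plan is to combine the functional equation $L(t)=P(1-e^{-t})$, derived in the preceding paragraph from the isomorphism~\eqref{eq:plane} and the fact that the EGF for cyclic permutations is $\log(1/(1-t))$, with the explicit product formula for $P(t)$ recalled from Bousquet-M\'elou et al.~\cite{bcdk}. Substituting $t\mapsto 1-e^{-t}$ into the $i$-th factor of $P(t)$ gives
$$1-(1-(1-e^{-t}))^i = 1-(e^{-t})^i = 1-e^{-ti},$$
and therefore
$$L(t) = P(1-e^{-t}) = \sum_{n\ge 0} \prod_{i=1}^n\bigl(1-e^{-ti}\bigr),$$
which is the first claimed expression.

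For the numerical expansion, I would expand each factor as a power series $1-e^{-ti} = ti - (ti)^2/2! + (ti)^3/3! - \cdots$. Because this series vanishes at $t=0$, the product $\prod_{i=1}^n(1-e^{-ti})$ is of order $t^n$ at the origin, so only finitely many summands contribute to any prescribed coefficient of $L(t)$; in particular, the sum is a well-defined formal power series. Multiplying out and collecting coefficients of $t^k/k!$ for $k\le 7$ reproduces the stated values $1,1,3,19,207,3451,81663,2602699$ (a numerical sanity check against $P(t)$ at a few initial terms confirms that the substitution has been carried out correctly).

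There is essentially no obstacle to overcome: the substantive content has already been packaged into the relation $L(t)=P(1-e^{-t})$, and the remaining step is the elementary substitution and verification of well-definedness described above.
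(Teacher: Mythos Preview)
Your proposal is correct and follows exactly the paper's approach: the substantive work is the relation $L(t)=P(1-e^{-t})$ already established before the corollary, and you perform the same elementary substitution $1-(1-(1-e^{-t}))^i=1-e^{-ti}$ together with the observation that each summand has order $t^n$ so the series is well defined. There is nothing to add.
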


This last result also follows from a result of Zagier~\cite[Eq.\ 24]{z} and a
bijection, due to Bousquet-M\'elou et\ al.~\cite{bcdk}, between
unlabeled $(2+2)$-free posets and certain matchings.  See also
Exercises 14 and 15 in Chapter 3 of the second edition of Enumerative
Combinatorics volume 1 (available on R. Stanley's homepage).

\end{document}